\begin{document}

 \newtheorem{thm}{Theorem}[section]
 \newtheorem{cor}[thm]{Corollary}
 \newtheorem{lem}[thm]{Lemma}{\rm}
 \newtheorem{prop}[thm]{Proposition}

 \newtheorem{defn}[thm]{Definition}{\rm}
 \newtheorem{assumption}[thm]{Assumption}
 \newtheorem{rem}[thm]{Remark}
 \newtheorem{ex}{Example}\numberwithin{equation}{section}

\def\supp{{\rm supp}}
\def\la{\langle}
\def\ra{\rangle}
\def\e{{\rm e}}
\def\x{\mathbf{x}}
\def\by{\mathbf{y}}
\def\bz{\mathbf{z}}
\def\F{\mathcal{F}}
\def\R{\mathbb{R}}
\def\T{\mathbf{T}}
\def\N{\mathbb{N}}
\def\K{\mathbf{K}}
\def\bK{\overline{\mathbf{K}}}
\def\Q{\mathbf{Q}}
\def\M{\mathbf{M}}
\def\O{\mathbf{O}}
\def\C{\mathbf{C}}
\def\P{\mathbf{P}}
\def\Z{\mathbf{Z}}
\def\H{\mathcal{H}}
\def\A{\mathbf{A}}
\def\V{\mathbf{V}}
\def\AA{\overline{\mathbf{A}}}
\def\B{\mathbf{B}}
\def\c{\mathbf{c}}
\def\L{\mathbf{L}}
\def\bS{\mathbf{S}}
\def\H{\mathcal{H}}
\def\I{\mathbf{I}}
\def\Y{\mathbf{Y}}
\def\X{\mathbf{X}}
\def\G{\mathbf{G}}
\def\f{\mathbf{f}}
\def\z{\mathbf{z}}
\def\y{\mathbf{y}}
\def\d{\hat{d}}
\def\bx{\mathbf{x}}
\def\y{\mathbf{y}}
\def\v{\mathbf{v}}
\def\g{\mathbf{g}}
\def\w{\mathbf{w}}
\def\b{\mathcal{B}}
\def\a{\mathbf{a}}
\def\b{\mathbf{b}}
\def\h{\mathbf{h}}
\def\q{\mathbf{q}}
\def\u{\mathbf{u}}
\def\s{\mathcal{S}}
\def\cc{\mathcal{C}}
\def\co{{\rm co}\,}
\def\cop{{\rm COP}\,}
\def\tg{\tilde{f}}
\def\tx{\tilde{\x}}
\def\supmu{{\rm supp}\,\mu}
\def\supnu{{\rm supp}\,\nu}
\def\m{\mathcal{M}}
\def\s{\mathcal{S}}
\def\k{\mathcal{K}}
\def\la{\langle}
\def\ra{\rangle}
\def\dd{\dagg\dagg}
\def\l{{\rm !}}
\def\psd{{\rm Psd}}

\title{The $\K$-moment problem for continuous linear functionals}

\author{Jean B. Lasserre}
\address{LAAS-CNRS and Institute of Mathematics\\
University of Toulouse\\
LAAS, 7 avenue du Colonel Roche\\
31077 Toulouse C\'edex 4,France}
\email{lasserre@laas.fr}
\date{}

\begin{abstract}
Given a closed (and non necessarily compact) basic semi-algebraic set
$\K\subseteq\R^n$, we solve the $\K$-moment problem for continuous linear functionals.
Namely, we introduce a weighted $\ell_1$-norm $\ell_\w$ on $\R[\x]$, and
show that the $\ell_\w$-closures  of the preordering $P$ and quadratic module $Q$
(associated with the generators of $\K$)
is the cone $\psd(\K)$ of polynomials nonnegative on $\K$. We also prove that
$P$ and $Q$ solve the $\K$-moment problem for $\ell_\w$-continuous linear functionals
and completely characterize those $\ell_\w$-continuous linear functionals nonnegative on 
$P$ and $Q$ (hence on $\psd(\K)$). When $\K$ has a nonempty interior
we also provide in explicit form a canonical
$\ell_\w$-projection $g^\w_f$ for any polynomial $f$,
on the (degree-truncated) preordering or quadratic module.
Remarkably, the support of $g^\w_f$ is very sparse and does not depend on $\K$! 
This enables us to provide an explicit Positivstellensatz on $\K$. 
At last but not least, we provide a simple characterization of polynomials nonnegative on
$\K$, which is crucial in proving the above results.
\end{abstract}

\keywords{Moment problems; real algebraic geometry; positive polynomials; semi-algebraic sets}

\subjclass{44A60 13B25 14P10 30C10}

\maketitle

\section{Introduction}

This paper is concerned with basic closed semi-algebraic sets $\K\subseteq\R^n$
and the {\it preordering} $P(g)$ and {\it quadratic module} $Q(g)$
associated with the finite family of polynomials $(g_j)$, $j\in J$, that generate $\K$.
In particular, when $\K=\R^n$ then the latter two convex cones coincide with the cone $\Sigma[\x]$ of
sums of squares (s.o.s.) of polynomials.
The convex cones $P(g)$ and $Q(g)$ (which are subcones of the convex cone $\psd(\K)$ of polynomials nonnegative on $\K$) are of practical importance because on the one hand nonnegative polynomials are ubiquitous but on the other hand,
polynomials in $P(g)$ or $Q(g)$ are much easier to handle.
For instance, and in contrast with nonnegative polynomials, checking whether
a given polynomial is in $P_d(g)\,(\subset P(g))$ or $Q_d(g)\,(\subset Q(g))$ (i.e., with an {\it a priori}  degree bound 
$d$ on its representation) can be done efficiently by solving a semidefinite program, a powerful technique of convex optimization.

The celebrated $\K$-moment problem is concerned with characterizing all real sequences
$\y=(y_\alpha)$, $\alpha\in\N^n$, which can be realized as the moment sequence of some finite Borel measure
on $\K$. For such sequences $\y$, the linear form $L_\y$ associated with $\y$ is called a {\it $\K$-moment functional}
and Haviland's theorem states that $L_\y$ is a $\K$-moment functional if and only if
$L_\y(f)\geq0$ for all $f\in \psd(\K)$. And so one says that $P(g)$ (resp. $Q(g)$) solves the $\K$-moment problem if the $\K$-moment functionals are those which satisfy $L_\y(f)\geq 0$ on $P(g)$ (resp. $Q(g)$). 
However, this is true if and only if $P(g)$ (resp. $Q(g)$) is dense in $\psd(\K)$ for the finest locally convex topology on $\R[\x]$.
When $\K$ is compact, the $\K$-moment problem was  completely solved (with $\overline{P(g)}=\psd(\K)$)
in Schm\"udgen \cite{schmudgen} and soon refined to $\overline{Q(g)}=\psd(\K)$ in Putinar \cite{putinar}
for Archimedean quadratic modules $Q(g)$. 

Since then, recent contributions have tried to better understand (in even a more general framework) the links between
$\psd(\K)$ and closures (and sequential closures) of preorderings and quadratic modules,
one important goal being to extend (or provide analogues of) Schm\"udgen and Putinar's Positivstellensatz\"e \cite{schmudgen,putinar} to cases where $\K$ is not compact. In particular, Scheiderer \cite{claus} has shown
rather negative results in this direction.
For more details on those recent results, the interested is referred to e.g. Powers and Scheiderer \cite{powers},
Kuhlmann and Marshall \cite{salma0}, Kuhlmann et al. \cite{salma}, and Cimpric et al. \cite{cimpric}.

On the one hand, {\it all} linear functionals are continuous in
the finest locally convex topology, on the other hand, the negative results of Scheiderer \cite{claus}
suggest that solving the $\K$-moment problem via preorderings or quadratic modules is possible only in specific cases,
and so this topology is not the most appropriate in general. So why not rather consider other topologies on $\R[\x]$ and
the $\K$-moment problem for linear functionals $L_\y$ that are {\it continuous} for these topologies?
This is the point of view taken in Ghasemi et al. \cite{lastsalma} where the authors
consider certain (weighted) norm-topologies and show that the closure of the cone of
sums of squares is $\psd([-1,1]^n)$ as did Berg \cite{berg2} for the $\ell_1$-norm.
Notice that this was also the point of view taken in Schm\"udgen \cite{schmudgen3}
for a class of non commutative (enveloping) $*$-algebra; in the latter context \cite{schmudgen3}, the author
proves that  the cone of {\it strongly positive} elements
is the closure of the (smallest) cone of sums of squares in certain topologies.

{\bf Contribution.} In view of the negative results in Scheiderer \cite{claus}, 
we also consider the above mentioned viewpoint and look at the $\K$-moment problem by using 
a particular weighted $\ell_1$-norm on $\R[\x]$ (denoted $\ell_\w$ for a certain sequence $\w:\N^n\to\R_{>0}$) rather
than the usual finest locally convex topology.
In this framework we solve the $\K$-moment problem for basic closed semi-algebraic sets $\K\subseteq\R^n$
in the following sense. We prove that
(a) the $\ell_\w$-closure of $P(g)$ and $Q(g)$ is exactly $\psd(\K)$, and (b) 
$P(g)$ and $Q(g)$ solve the moment problem, i.e.,
the $\K$-moment ($\ell_\w$-continuous) functionals are those
$L_\y$ that are $\ell_\w$-continuous and nonnegative on $Q(g)$ (or $P(g)$).
In fact, such linear functionals $L_\y$ are characterized by:

- $L_\y(h^2 g_j)\geq0$ for all $h\in\R[\x]$, and every generator $g_j$ of $\K$.

- $\exists M>0$ such that $\vert y_\alpha\vert\leq \,M\,w_\alpha$ for all $\alpha\in\N^n$.

$\bullet$ Next, when $\K$ has a nonempty interior, there exist $\ell_\w$-projections 
of a polynomial onto $P_d(g)$ and $Q_d(g)$, where $P_d(g)$ (resp. $Q_d(g)$) denotes the subcone of elements of $P(g)$ (resp. of $Q(g)$) which
have a degree bound $d$ in their representation. In general these projections are not unique
but we provide a {\it canonical} $\ell_\w$-projection $g_f$ for any polynomial $f$,
which takes a remarkably simple and ``sparse" form, and particularly when $\ell_\w$ is the usual $\ell_1$-norm, in which case
\begin{equation}
\label{simpleform}
g_f\,=\,f+\lambda_0+\sum_{i=1}^n\lambda_i\, x_i^{2d},\end{equation}
for some nonnegative vector $\lambda\in\R^{n+1}$. In other words,
the {\it support} $\Vert g_f\Vert_0$ of $g_f$ does not depend on $\K$ and does not depend on $d$ either!
The dependence of $g_f$ on the 
$g_j$'s that define $\K$ is only through the 
coefficients $(\lambda^*_j)$. In addition, the support is very sparse since $\Vert g_f\Vert_0\leq \Vert f\Vert_0+n+1$.
This confirms the property of the $\ell_1$-norm 
with respect to sparsity, already observed in other contexts. Minimizing the $\ell_1$-norm
aims at finding a solution with small support (where $\Vert\x\Vert_0=\#\{i:x_i\neq0\}$).
Finally, the vector $\lambda$ in (\ref{simpleform}) is an optimal solution of an explicit semidefinite program, and so can be computed efficiently.

$\bullet$ We also provide a canonical $\ell_1$-projection of $f$
onto $\overline{P(g)\cap\R[\x]_{2d}}$ which is again of the form (\ref{simpleform}), and use this result to characterize
the sequential closure $P(g)^{\ddag}$ of $P(g)$ for the finest locally convex topology. Namely,
\[P(g)^{\ddag}\,=\,\left\{f\in\R[\x]\::\:\exists\,d\mbox{ s.t. }\forall\epsilon>0,\:f+\epsilon\left(1+\sum_{i=1}^nx_i^{2d}\right)\in P(g)\,\right\},\]
and the same statement is true for the quadratic module $Q(g)$. This latter result exhibits the particularly simple form 
$q:=(1+\sum_{i=1}^nx_i^{2d})$ for the possible polynomials $q$ in the characterization
of $P(g)^{\ddag}$ (and $Q(g)^{\ddag}$) provided in e.g. \cite{cimpric,salma}; e.g., 
in \cite{salma} it is stated that one may take the polynomial $(1+\Vert\x\Vert^2)^s$ for some $s$.

$\bullet$ Thanks to the characterization of canonical $\ell_\w$-projections, we finally obtain
a Positivstellensatz on $\K$ of the following form:
$f\in\psd(\K)$ if and only if for every $\epsilon>0$, there is some $d\in\N$
such that the polynomial $f+\epsilon(1+\sum_{i=1}^n\sum_{k=1}^d x^{2k}_i/(2k)\l)$ is in $P(g)$ (or $Q(g)$).

$\bullet$ At last but not least, and crucial for the above results,
we prove a result of independent interest, 
concerned with sequences $\y=(y_\alpha)$, $\alpha\in\N^n$, that have a finite representing
Borel measure $\mu$. Namely, we prove that a polynomial $f$ is nonnegative on the support of $\mu$
if and only if $\int h^2fd\mu\geq0$ for all $h\in\R[\x]$.

The paper is organized as follows. In Section 2,  and after introducing the notation and definitions,
we present the intermediate result mentioned above. In Section 3 we show that $P(g)$ and $Q(g)$ solve the $\K$-moment problem for $\ell_\w$-continuous linear functionals. In section 4, we provide explicit expressions for the 
canonical $\ell_\w$-projections onto $P(g)$, $Q(g)$ and their truncated versions. Moreover,
we characterize the sequential closures $Q(g)^{\ddag}$ and $P(g)^{\ddag}$ and
we end up with a Positivstellensatz for
$\K$.

\section{Notation, definitions and preliminaries}

\subsection{Notation and definitions}
The notation $\mathcal{B}$ stands for the Borel $\sigma$-field of $\R^n$.
Let $\R[\x]$ (resp. $\R[\x]_d$) denote the ring of real polynomials in the variables
$\x=(x_1,\ldots,x_n)$ (resp. polynomials of degree at most $d$), whereas $\Sigma[\x]$ (resp. $\Sigma[\x]_d$) denotes 
its subset of sums of squares (s.o.s.) polynomials (resp. of s.o.s. of degree at most $2d$).
For every
$\alpha\in\N^n$ the notation $\x^\alpha$ stands for the monomial $x_1^{\alpha_1}\cdots x_n^{\alpha_n}$,
$\vert\alpha\vert$ stands for the integer $a:=\alpha_1+\cdots+\alpha_n$, and
$\alpha\l$ stands for the integer $a\l$ 
For an arbitrary set $\bS\subseteq\R^n$, let $\psd(\bS)$ denote the convex cone of polynomials
that are nonnegative on $\bS$.

For every $i\in\N$, let $\N^{n}_d:=\{\beta\in\N^n:\sum_j\beta_j\leq d\}$ whose cardinal is $s(d)={n+d\choose n}$.
A polynomial $f\in\R[\x]$ is written 
\[\x\mapsto f(\x)\,=\,\sum_{\alpha\in\N^n}\,f_\alpha\,\x^\alpha,\]
and $f$ can be identified with its vector of coefficients $\f=(f_\alpha)$ in the canonical basis 
$(\x^\alpha)$, $\alpha\in\N^n$.  The {\it support} of $f\in\R[\x]$ is the set $\{\alpha\in\N^n:f_\alpha\neq0\}$ and let
$\Vert f\Vert_0:= {\rm card}\,\{\alpha:f_\alpha\neq0\}$. 
Denote by $\Vert f\Vert_1$ the $\ell_1$-norm $\sum_{\alpha}\vert f_\alpha\vert$
of the coefficient vector $\f$. 

Crucial in the sequel is the use of the following $\ell_\w$-norm which is a {\it weighted} $\ell_1$-norm defined from
the sequence $\w=(w_\alpha)$, $\alpha\in\N^n$, where $w_\alpha:=(2\lceil \vert\alpha\vert/2\rceil)\l$.
Namely, denote by $\Vert f\Vert_\w$ the $\ell_\w$-norm $\sum_{\alpha}w_\alpha\vert f_\alpha\vert$
of the coefficient vector $\f$; hence
the $\ell_1$-norm corresponds to the case where $w_\alpha=1$ for all $\alpha\in\N^n$.
Both $\ell_1$ and $\ell_\w$ also define a norm on $\R[\x]_d$.

Let $\s^p\subset\R^{p\times p}$ denote the space of real $p\times p$ symmetric matrices.
 For any two matrices $\A,\B\in\s^p$, the notation $\A\succeq0$ (resp. $\succ0$) stands for $\A$ is positive semidefinite
 (resp. positive definite), and the notation
$\la \A,\B\ra$ stands for ${\rm trace}\,\A\B$.

Let $\v_d(\x)=(\x^\alpha)$, $\alpha\in\N^n_d$, and let
$\B^0_\alpha\in\R^{s(d)\times s(d)}$ be real symmetric matrices such that
\begin{equation}
\label{balpha}
\v_d(\x)\,\v_d(\x)^T\,=\,\sum_{\alpha\in\N^n_{2d}}\x^\alpha\,\B^0_\alpha.\end{equation}
Recall that a polynomial $g\in\R[\x]_{2d}$ is a s.o.s. if and only if there exists a real positive semidefinite matrix
$\X\in\R^{s(d)\times s(d)}$ such that
\[g_\alpha\,=\,\langle \X,\B^0_\alpha\ra,\qquad \forall \alpha\in\N^n_{2d}.\]

Let $g_j\in\R[\x]$, $j=0,1,\ldots,m$, with $g_0$ being the constant polynomial $g_0(\x)=1$ for all $\x$,
and let $\K\subseteq\R^n$ be the basic closed semi algebraic set:
\begin{equation}
\label{setk}
\K:=\{\x\in\R^n\,:\, g_j(\x)\geq0,\: j=1,\ldots,m\},
\end{equation}
For every $J\subseteq\{1,\ldots,m\}$ let $g_J:=\prod_{k\in J}g_k$, with the convention $g_\emptyset:=1$,
and let $v_J:=\lceil ({\rm deg}\,g_J)/2\rceil$ (with $v_j:=v_{\{j\}}$).
\begin{defn}
With $\K$ as in (\ref{setk}),  let $P(g),Q(g)\subset\R[\x]$ and $P_k(g),Q_k(g)\subset\R[\x]_{2k}$ be the convex cones:
\begin{eqnarray*}
P(g)&:=&\left\{\sum_{J\subseteq\{1,\ldots,m\}}\sigma_J\,g_J\::\: \sigma_J\in\Sigma[\x],\: J\subseteq\{1,\ldots,m\}\right\},\\
P_k(g)&:=&\left\{\sum_{J\subseteq\{1,\ldots,m\}}\sigma_J\,g_J\::\: \sigma_J\in\Sigma[\x]_{k-v_J},\: J\subseteq\{1,\ldots,m\}\right\},\\
Q(g)&:=&\left\{\sum_{j=0}^m \sigma_j\,g_j\::\: \sigma_j\in\Sigma[\x],\: j=1,\ldots,m\right\},\\
Q_k(g)&:=&\left\{\sum_{j=0}^m \sigma_j\,g_j\::\: \sigma_j\in\Sigma[\x]_{k-v_j},\: j=1,\ldots,m\right\}.
\end{eqnarray*}
\end{defn}
The set $P(g)$ (resp. $Q(g)$) is a convex cone called the {\it preordering} (resp. the {\it quadratic module}) associated with the $g_j$'s. 
Obviously, if $h\in P(g)$ (resp. $h\in Q(g)$), the associated s.o.s. weights $\sigma_J$'s (resp. $\sigma_j$'s) of its representation provide a certificate of nonnegativity of $h$ on $\K$. The convex cone $P_k(g)$ (resp. $Q_k(g)$) is the
subset of elements $h\in P(g)$ (resp. $h\in Q(g)$) with a {\it degree bound $2k$} certificate. Observe that 
$P_k(g)\subset P(g)\cap\R[\x]_{2k}$ and $Q_k(g)\subset Q(g)\cap\R[\x]_{2k}$.

\subsection*{Moment matrix} With a sequence $\y=(y_\alpha)$, $\alpha\in\N^n$,
let $L_\y:\R[\x]\to\R$ be the linear functional
\[f\quad (=\sum_{\alpha}f_{\alpha}\,\x^\alpha)\quad\mapsto\quad
L_\y(f)\,=\,\sum_{\alpha}f_{\alpha}\,y_{\alpha},\quad f\in\R[\x].\]
With $d\in\N$, the $d$-moment matrix associated with $\y$
is the real symmetric matrix $\M_d(\y)$ with rows and columns indexed 
in $\N^n_d$, and defined by:
\begin{equation}
\label{moment}
\M_d(\y)(\alpha,\beta)\,:=\,L_\y(\x^{\alpha+\beta})\,=\,y_{\alpha+\beta},\qquad\forall\alpha,\beta\in\N^n_d.\end{equation}
Alternatively, $\M_d(\y)=\sum_{\alpha\in\N^n_{2d}}y_\alpha\B^0_\alpha$.
It is straightforward to check that
\[\left\{\,L_\y(g^2)\geq0\quad\forall g\in\R[\x]_d\,\right\}\quad\Leftrightarrow\quad\M_d(\y)\,\succeq\,0,\quad d=0,1,\ldots.\]
A sequence $\y=(y_\alpha)$ has a representing measure if there exists a finite Borel measure $\mu$ on $\R^n$, such that
$y_\alpha=\int \x^\alpha d\mu$ for every $\alpha\in\N^n$. Moreover, the measure $\mu$ is said to be {\it determinate} if
it is the unique such measure.
Notice that with the $\ell_\w$-norm on $\R[\x]$ is associated a dual norm $\Vert\cdot\Vert^*_\w$
on the dual space $\R[\x]^*$ of $\ell_\w$-continuous linear functionals on $\R[\x]$, by
$\Vert L_\y\Vert^*_\w =\sup\{\vert y_\alpha\vert/w_\alpha:\alpha\in\N^n\}$.

\subsection*{Localizing matrix}
With $\y$ as above, $J\subseteq\{1,\ldots,m\}$, and $g_J\in\R[\x]$ (with $g_J(\x)=\sum_\gamma g_{J\gamma}\,\x^\gamma$), the {\it localizing} matrix of order $d$ associated with $\y$ and $g_J$ is the real symmetric matrix $\M_d(g_J\,\y)$ with rows and columns indexed by $\N^n_d$, and whose entry $(\alpha,\beta)$ is just 
\begin{equation}
\label{local}
\M_d(\y)(g_J\,\y)(\alpha,\beta)\,:=\,L_\y(g_J(\x)\x^{\alpha+\beta})\,=\,
\sum_{\gamma}g_{J\gamma} \,y_{\alpha+\beta+\gamma},\quad\forall\alpha,\beta\in\N^n_d.\end{equation}
If $\B^J_\alpha\in\s^{s(d)}$ is defined by:
\begin{equation}
\label{calpha}
g_J(\x)\,\v_d(\x)\,\v_d(\x)^T\,=\,\sum_{\alpha\in\N^n_{2d+{\rm deg}\,g_J}}\B^J_\alpha\,\x^\alpha,\qquad\forall\x\in\R^n,\end{equation}
then $\M_d(g_J\,\y)=\sum_{\alpha\in\N^n_{2d+{\rm deg}g_J}}y_\alpha\,\B^J_\alpha$. Alternatively,
$\M_d(g_J\,\y)=\M_d(\z)$ where $\z=(z_\alpha)$, $\alpha\in\N^n$, with $z_\alpha=L_\y(g_J\,\x^\alpha)$.

\subsection*{Multivariate Carleman's condition}
Let $\y=(y_\alpha)$, $\alpha\in\N^n$, be such that $\M_d(\y)\succeq0$ for all $d\in\N$. If
for every $i=1,\ldots,n$,
\begin{equation}
\label{carleman}
\sum_{k=1}^\infty L_\y(x_i^{2k})^{-1/2k}\,=\,\infty,
\end{equation}
then $\y$ has a finite representing Borel measure $\mu$ on $\R^n$, which in addition, is determinate; see
e.g. Berg \cite{berg2}.

\subsection*{Closures}

For a set $A\subset\R[\x]$ we denote by $\overline{A}$ the closure of $A$ for the finest locally convex topology on 
$\R[\x]$ (treated as a real vector space). With this topology, every finite-dimensional subspace of $\R[\x]$ inherits the euclidean topology, so that 
$\overline{A}$ also denotes the usual euclidean closure of
a subset $A\subset\R[\x]_d$. Following Cimpric et al. \cite{cimpric} and
Kuhlmann et al. \cite{salma}, we also denote by $A^{\ddag}$ the set of all elements of $\R[\x]$
which are expressible as the limit of some sequence of elements of $A$, and so $A^{\ddag}$ is 
called the {\it sequential} closure of $A$, and clearly $A\subseteq A^{\ddag}\subseteq\overline{A}$.
If $A\subset\R[\x]$ is a convex cone 
\[A^{\ddag}\,=\,\{f\in\R[\x]\::\:\exists\,q\,\in\R[\x] \mbox{ s.t. } f+\epsilon\,q\in A,\quad\forall \epsilon>0\:\},\]
and in fact, $q$ can be chosen to be in $A$. Moreover, if $A$ has nonempty interior (equivalently, has an algebraic interior) then
$A^{\ddag}=\overline{A}$. 

\subsection*{Semidefinite programming} A semidefinite program is a convex (more precisely convex conic) optimization problem 
of the form 
\[\inf_{\X}\: \{\:\la \C,\X\ra\::\:\mathcal{A}\,\X\,=\b;\:\X\succeq0\:\},\]
for some real symmetric matrices $\C,\X\in\s^p$, vector $\b\in\R^m$,
and some linear mapping $\mathcal{A}:\s^p\to\R^m$. Semidefinite programming is a powerful technique 
of convex optimization, ubiquitous in many areas. A semidefinite program can be solved
efficiently and even in time polynomial in the input size of the problem, for fixed arbitrary precision.
For more details the interested reader is referred to e.g.
\cite{boyd}.

\subsection{A preliminary result of independent interest}

Recall that in a complete separable metric space $\X$, the support of a finite Borel measure $\mu$ 
(denoted $\supmu$) is the unique smallest closed set $\K\subseteq\X$ such that $\mu(\X\setminus\K)=0$.
\begin{thm}
\label{newlook}
Let $f\in\R[\x]$ and $\mu$ be a finite Borel measure with all moments 
$\y=(y_\alpha)$, $\alpha\in\N^n$, finite and such that for some $M>0$ and
all $k\in\N$ and all $i=1,\ldots,n$, $L_\y(x_i^{2k})\leq (2k)\l\,M$. Then:
\begin{eqnarray}
\label{mu}
\mbox{$f\geq0$ on $\supmu$ }&\Longleftrightarrow&\int h^2\,f\,d\mu\,\geq\,0\quad\forall h\in\R[\x]\\
\nonumber
&\Longleftrightarrow&\M_d(f\,\y)\,\succeq\,0,\qquad\forall d=0,1,\ldots
\end{eqnarray}
\end{thm}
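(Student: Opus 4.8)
The plan is to dispose of the two easy equivalences first and then concentrate all the effort on the one nontrivial implication. For $h\in\R[\x]_d$ with coefficient vector $\h$, the identity $\int h^2 f\,d\mu=L_\y(h^2f)=\h^T\M_d(f\y)\,\h$, read off directly from (\ref{local})--(\ref{calpha}), shows that ``$\int h^2f\,d\mu\geq0$ for every $h$'' is literally the same as ``$\M_d(f\y)\succeq0$ for every $d$''. Likewise the implication ``$f\geq0$ on $\supmu$'' $\Rightarrow$ ``$\int h^2f\,d\mu\geq0$'' is immediate, since then the integrand $h^2f$ is nonnegative $\mu$-almost everywhere ($\mu$ being carried by $\supmu$). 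So the whole content is the reverse implication, which I would prove as follows.

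Assuming $\int h^2f\,d\mu\geq0$ for all $h\in\R[\x]$, I would introduce the Jordan-type splitting $f=f^+-f^-$, with $f^+:=\max(f,0)$ and $f^-:=\max(-f,0)$, and the three finite Borel measures $d\mu_1:=f^+\,d\mu$, $d\mu_2:=f^-\,d\mu$, and $\nu:=\mu_1+\mu_2=|f|\,d\mu$ (all with finite moments, being dominated by finitely many $y_\alpha$). Since $f^-$ is supported exactly on $\{f<0\}$, the goal ``$f\geq0$ on $\supmu$'' is equivalent to $\mu_2\equiv0$, i.e. to $\int f^-\,d\mu=0$; and the hypothesis becomes $\int h^2\,d\mu_1\geq\int h^2\,d\mu_2$ for all $h$.

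The engine is a density statement for $\nu$, and establishing it is where the real work lies. First I would verify that $\nu$ satisfies Carleman's condition (\ref{carleman}): writing $|f|\leq C(1+\sum_j x_j^{2N})$ with $2N\geq{\rm deg}\,f$, the hypothesis $L_\y(x_i^{2k})\leq(2k)\l\,M$ together with one application of the Cauchy--Schwarz inequality in $L^2(\mu)$ gives $\int x_i^{2k}\,d\nu\leq C'\bigl((2k)\l+((4k)\l)^{1/2}\bigr)$; since $((4k)\l)^{1/4k}\leq 4k$, one obtains $(\int x_i^{2k}\,d\nu)^{-1/2k}\geq c/(4k)$, whose sum over $k$ diverges. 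In fact the same estimate yields a finite exponential moment $\int \e^{a|x_i|}\,d\nu<\infty$ for small $a>0$, and by the classical result that a finite exponential moment forces $\R[\x]$ to be dense in $L^2(\nu)$ I would conclude the desired density. Upgrading the bare determinacy granted by (\ref{carleman}) to genuine $L^2$-density, and dealing with it in the multivariate setting, is the main obstacle of the argument.

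With density in hand the conclusion is quick. I would choose polynomials $p_k\to\mathbf 1_{\{f<0\}}$ in $L^2(\nu)$ (the indicator lies in $L^2(\nu)$, being bounded and $\nu$ finite). Because $\mathbf 1_{\{f<0\}}^2=\mathbf 1_{\{f<0\}}$, the factorisation $p_k^2-\mathbf 1_{\{f<0\}}=(p_k-\mathbf 1_{\{f<0\}})(p_k+\mathbf 1_{\{f<0\}})$ and Cauchy--Schwarz give $p_k^2\to\mathbf 1_{\{f<0\}}$ in $L^1(\nu)$, hence in $L^1(\mu_1)$ and in $L^1(\mu_2)$ simultaneously. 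Passing to the limit in $\int p_k^2\,d\mu_1\geq\int p_k^2\,d\mu_2$ then yields $\int\mathbf 1_{\{f<0\}}f^+\,d\mu\geq\int\mathbf 1_{\{f<0\}}f^-\,d\mu$; the left-hand side is $0$ since $f^+=0$ on $\{f<0\}$, while the right-hand side equals $\int f^-\,d\mu$ since $f^-$ is supported on $\{f<0\}$. Thus $0\geq\int f^-\,d\mu\geq0$, forcing $\int f^-\,d\mu=0$, which is exactly $f\geq0$ on $\supmu$.
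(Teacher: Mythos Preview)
Your proof is correct but follows a genuinely different route from the paper. The paper stays entirely within the moment-problem/determinacy framework: it shows (via its Lemma~\ref{newcarleman}) that the moment sequence of the signed measure $f\,d\mu$ and of its Hahn--Jordan parts $\nu^\pm$ all satisfy Carleman's condition, produces from $\M_d(f\y)\succeq0$ a \emph{positive} representing measure $\psi$ for the moments of $f\,d\mu$, and then uses determinacy to force $\nu^+=\nu^-+\psi$, whence $\nu^-(\{f<0\})=0$ and so $\nu^-=0$. You instead pass to $\nu=|f|\,d\mu$, turn the moment bound into an exponential-moment bound for $\nu$, invoke the classical $L^2(\nu)$-density of polynomials, and approximate the indicator $\mathbf 1_{\{f<0\}}$ directly by squares. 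Your approach is more hands-on and sidesteps the existence-plus-determinacy two-step and the auxiliary measure $\psi$; the price is that $L^2$-density under exponential moments is imported as a black box, whereas the paper keeps everything reducible to the bare Carleman criterion proved in its appendix. One small point worth making explicit: $\mu_2\equiv0$ literally gives only $f\geq0$ $\mu$-a.e.; the upgrade to all of $\supmu$ needs the one-line continuity argument (if $f(\x_0)<0$ then $f<0$ on a neighbourhood of positive $\mu$-measure), which the paper spells out.
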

\begin{proof}
The implication $\Rightarrow$ is clear. For the reverse implication,
consider the signed Borel measure
$\nu(B):=\int_Bfd\mu$, for all Borel sets $B\in\mathcal{B}$, and let $\z=(z_\alpha)$, $\alpha\in\N^n$, be its sequence of moments. 
By Lemma \ref{newcarleman}, the sequence $\z$ satisfies Carleman's condition (\ref{carleman}). Next, recalling
that $\M_k(f\,\y)=\M_k(\z)$ for every $k\in\N$,
\[\left(\int h^2f\,d\mu\,\geq\,0\quad\forall h\in\R[\x]\,\right)\,\Longleftrightarrow\quad\M_k(\z)\,\succeq\,0,\forall \,k\in\N.\]
This combined with the fact that $\z$ satisfies Carleman's condition yields that $\z$ is the moment sequence of a finite Borel measure
$\psi$ on $\R^n$, which in addition, is determinate. Therefore,
\begin{equation}
\label{auxx}
z_\alpha\,=\,\int \x^\alpha\,\underbrace{f(\x)d\mu(\x)}_{d\nu(\x)}\,=\,\int \x^\alpha\,d\psi(\x),\qquad\forall\alpha\in\N^n.\end{equation}
Let $\Gamma+:=\{\x:f(\x)\geq0\}$, $\Gamma^-:=\{\x:f(\x)<0\}$ and let
$\mu=\mu^++\mu^-$ with $\mu^+(B)=\mu(B\cap\Gamma^+)$,
$\mu^-(B)=\mu(B\cap\Gamma^-)$, for all $B\in\mathcal{B}$.
Similarly, let $\nu=\nu^+-\nu^-$ with $\nu^+$ and $\nu^-$ being the positive measures defined by
$\nu^+(B)=\int_Bfd\mu^+$ and $\nu^-(B)=-\int_Bfd\mu^-$, for all $B\in\mathcal{B}$.
Since $\mu^+,\mu^-\leq\mu$, one has
\[\int x_i^{2k}d\mu^+(\x)\,\leq\,\int x_i^{2k}d\mu(\x)\quad\mbox{and}\quad\int x_i^{2k}d\mu^-(\x)\,\leq\,
\int x_i^{2k}d\mu(\x),\]
for all $i=1,\ldots,n$ and all $k\in\N$. Therefore, again by  Lemma \ref{newcarleman}, both $\nu^+$ and $\nu^-$ satisfy Carleman's condition (\ref{carleman})
so that both are determinate. On the other hand, (\ref{auxx}) can be rewritten,
\[\int \x^\alpha\,d\nu^+(\x)\,=\,\int \x^\alpha\,d\nu^-(\x)+
\int \x^\alpha\,d\psi(\x),\qquad\forall\alpha\in\N^n,\]
and so $\nu^+=\nu^-+\psi$ because $\nu^+$ and $\nu^-+\psi$ are determinate.
But then $0=\nu^+(\Gamma^-)\geq\nu^-(\Gamma^-)$ implies that $\nu^-=0$, i.e.,
$\nu=\nu^+=\psi$, and so the signed Borel measure $\nu$ is in fact a positive measure.
This in turn implies that $f\geq0$ for all $\x\in\supmu\setminus G$, where $G\subset\supmu$ and
$\mu(G)=0$. Notice that by minimality of the support ,
$\overline{\supmu\setminus G}=\supmu$. Hence let $\x\in\supmu$ be fixed, arbitrary. 
As $\overline{\supmu\setminus G}=\supmu$, there is sequence
$(\x_\ell)\subset \supmu\setminus G$ such that $\x_\ell\to\x$ as $\ell\to\infty$, and $f(\x_\ell)\geq0$ for all $\ell$.
But then continuity of $f$ yields that $f(\x)\geq0$.
\end{proof}
Interestingly, as we next see, Theorem \ref{newlook} yields alternative characterizations of the cone
$\psd(\bS)$ for an arbitrary closed set $\bS\subset\R^n$.

For a finite Borel measure $\mu$ (with all moments finite) and a polynomial $f\in\R[\x]$, let $\mu_f$ be the finite signed Borel measure defined by $\mu_f(B):=\int_{B}fd\mu$ for all $B\in\mathcal{B}$.
Let $\Theta_\mu:=\{\mu_\sigma:\sigma\in\Sigma[\x]\}$, i.e., $\Theta_\mu$ is the set of finite Borel measures absolutely continuous with respect to $\mu$, and whose density (or Radon Nikodym derivative) is a sum of squares of polynomials. 

Let $\Sigma[\x]^*\subset\R[\x]^*$ be the dual cone of the cone of $\Sigma[\x]$, i.e., the set of linear forms $L_\y$
on $\R[\x]$ such that $L_\y(h^2)\geq0$ for all $h\in\R[\x]$, and similarly,
let $\Theta_\mu^*\subset\R[\x]$ be the dual cone of $\Theta_\mu$, i.e., 
$\Theta_\mu^*:=\{h\in\R[\x]\,:\,\int hd\nu\geq0,\:\forall \nu\in\Theta_\mu\}$.
\begin{cor}
\label{consequences}
Let $\bS\subseteq\R^n$ be an 
arbitrary closed set and let $\mu$ be any finite Borel measure such that
$\supmu=\bS$ and $\x\mapsto\exp(\vert x_i\vert)$ is $\mu$-integrable for all $i=1,\ldots,n$.
Then with $f\in\R[\x]$, 
\begin{eqnarray}
\label{cons-1}
f\,\in\,\psd(\bS)&\Longleftrightarrow& \mu_f\in\Sigma[\x]^*\\
\label{cons-2}
\psd(\bS)&=&\Theta_\mu^*.
\end{eqnarray}
\end{cor}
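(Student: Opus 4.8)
The plan is to derive both equivalences directly from Theorem~\ref{newlook}, so that the corollary becomes essentially a restatement of that theorem once the dual cones $\Sigma[\x]^*$ and $\Theta_\mu^*$ are unwound. First I would check that $\mu$ meets the hypotheses of Theorem~\ref{newlook}. Writing $\y=(y_\alpha)$ for the moments of $\mu$, the power series of $\exp$ yields the pointwise bound $|x_i|^{2k}\le (2k)!\,\exp(|x_i|)$, which upon integration gives $L_\y(x_i^{2k})=\int x_i^{2k}\,d\mu\le (2k)!\int\exp(|x_i|)\,d\mu$. Taking $M:=\max_{1\le i\le n}\int\exp(|x_i|)\,d\mu<\infty$ produces exactly the bound $L_\y(x_i^{2k})\le (2k)!\,M$ required by the theorem; a similar estimate (e.g. $|\x^\alpha|\le\sum_i|x_i|^{|\alpha|}$ by weighted AM--GM, followed by the same exponential bound) shows that all moments of $\mu$, and hence of any polynomial multiple $f\,d\mu$, are finite.

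For (\ref{cons-1}) I would identify the signed measure $\mu_f$ with the linear form $h\mapsto\int h\,d\mu_f=\int hf\,d\mu$, whose moment sequence $\z=(z_\alpha)$ is $z_\alpha=\int\x^\alpha f\,d\mu$ (finite by the previous step). Then $L_\z(h^2)=\int h^2 f\,d\mu$, so the assertion $\mu_f\in\Sigma[\x]^*$ is, by the very definition of the dual cone, precisely the statement that $\int h^2 f\,d\mu\ge0$ for all $h\in\R[\x]$. By Theorem~\ref{newlook} this is equivalent to $f\ge0$ on $\supmu=\bS$, i.e. to $f\in\psd(\bS)$, which is exactly (\ref{cons-1}).

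For (\ref{cons-2}) I would unwind $\Theta_\mu^*$ in the same manner: $h\in\Theta_\mu^*$ means $\int h\,d\mu_\sigma=\int h\sigma\,d\mu\ge0$ for every $\sigma\in\Sigma[\x]$. Since each square $h'^2$ lies in $\Sigma[\x]$ and every element of $\Sigma[\x]$ is a finite sum of such squares, this condition is equivalent to $\int h\,h'^2\,d\mu\ge0$ for all $h'\in\R[\x]$; invoking Theorem~\ref{newlook} once more (with $f=h$) gives $h\ge0$ on $\bS$, whence $\Theta_\mu^*=\psd(\bS)$. The only step carrying genuine analytic content is the passage from exponential integrability to the factorial moment bound; the rest is bookkeeping with the dual-cone definitions, and since the exponential integrability hypothesis was tailored precisely to feed Theorem~\ref{newlook}, I expect no real obstacle.
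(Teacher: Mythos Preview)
Your proposal is correct and follows essentially the same approach as the paper: verify the factorial moment bound $L_\y(x_i^{2k})\le (2k)!\,M$ from the exponential integrability hypothesis, invoke Theorem~\ref{newlook}, and then simply unwind the definitions of $\Sigma[\x]^*$ and $\Theta_\mu^*$ to obtain (\ref{cons-1}) and (\ref{cons-2}). Your write-up is slightly more detailed (you spell out the finiteness of all moments and the reduction of $\sigma\in\Sigma[\x]$ to individual squares), but the argument is identical in substance.
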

\begin{proof}
Let $\y=(y_\alpha)$, $\alpha\in\N^n$, be the moment sequence of $\mu$.
Observe that for every $i=1,\ldots,n$, and all $k\in\N$,
\[L_\y\left(\frac{x_i^{2k}}{(2k)\l}\right)\,\leq\,\int_{\bS} \exp{(\vert x_i\vert)}d\mu(\x)
\,\leq\,M,\]
for some $M>0$. Moreover, $\supmu=\bS$ and so by Theorem \ref{newlook},
$f\geq0$ on $\bS$ if and only if $\int_{\bS} h^2fd\mu\geq0$ for all $h\in\R[\x]$. Equivalently, if and only if
$\int_{\bS} \sigma d\mu_f\geq0$ for all $\sigma\in\Sigma[\x]$ (which yields (\ref{cons-1})), or if and only if
$\int_{\bS}  fd \mu_\sigma\geq0$ for all $\sigma\in\Sigma[\x]$, which yields (\ref{cons-2}).
\end{proof}

\section{The $\K$-moment problem for $\ell_\w$-continuous linear functionals}

We first show that $Q(g)$ (and $P(g)$) solve the $\K$-moment problem
for $\ell_\w$-continuous linear functionals.

When equipped with the $\ell_\w$-norm, we may and will identify $\R[\x]$ 
as the subspace of sequences with finite support, 
in the Banach space of real infinite sequences $\f=(f_\alpha)$, $\alpha\in\N^n$, that are
$\w$-summable, i.e., such that $\sum_\alpha w_\alpha\vert f_\alpha\vert<+\infty$.

\begin{prop}
\label{dual-of-rx}
The dual of $(\R[\x],\Vert\cdot\Vert_\w)$ is the space $(\R[\x]^*,\Vert \cdot\Vert_\w^*)$ of 
linear functionals $L_\y$ associated with the sequence $\y=(y_\alpha)$, $\alpha\in\N^n$, which satisfy
$\Vert L_\y\Vert_\w^*<\infty$, where $\Vert L_\y\Vert^*_\w:=\sup\{\vert y_\alpha\vert/w_\alpha:\alpha\in\N^n\}$. 
\end{prop}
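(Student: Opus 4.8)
The plan is to identify the continuous dual of the normed space $(\R[\x],\Vert\cdot\Vert_\w)$ by a direct duality argument, treating $\R[\x]$ as the space $c_{00}$ of finitely-supported sequences reweighted by $\w$. First I would set up the weighted identification explicitly: the map $\Phi:\f\mapsto(w_\alpha f_\alpha)_\alpha$ is a linear isometry from $(\R[\x],\Vert\cdot\Vert_\w)$ onto the subspace of finitely-supported sequences inside $(\ell_1,\Vert\cdot\Vert_1)$, since $\Vert f\Vert_\w=\sum_\alpha w_\alpha\vert f_\alpha\vert=\Vert\Phi(\f)\Vert_1$. The classical fact that the dual of $(c_{00},\Vert\cdot\Vert_1)$ (equivalently of $\ell_1$) is $\ell_\infty$ with the supremum norm is what I will transport back through $\Phi$ to obtain the stated description of $\R[\x]^*$.

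Concretely, I would carry out the two inclusions. For the easy direction, suppose $L_\y$ satisfies $\Vert L_\y\Vert_\w^*=\sup_\alpha\vert y_\alpha\vert/w_\alpha=:C<\infty$. Then for any $f\in\R[\x]$,
\[
\vert L_\y(f)\vert=\Bigl\vert\sum_\alpha f_\alpha y_\alpha\Bigr\vert\leq\sum_\alpha\vert f_\alpha\vert\,\vert y_\alpha\vert=\sum_\alpha w_\alpha\vert f_\alpha\vert\,\frac{\vert y_\alpha\vert}{w_\alpha}\leq C\,\Vert f\Vert_\w,
\]
so $L_\y$ is $\ell_\w$-continuous with operator norm at most $C$. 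For the reverse direction, let $L$ be any continuous linear functional on $(\R[\x],\Vert\cdot\Vert_\w)$ and define $y_\alpha:=L(\x^\alpha)$; since $\R[\x]$ is the span of the monomials, $L=L_\y$ by linearity. Testing $L$ on the single monomial $\x^\alpha$, whose $\ell_\w$-norm is exactly $w_\alpha$, gives $\vert y_\alpha\vert=\vert L(\x^\alpha)\vert\leq\Vert L\Vert\,w_\alpha$, hence $\vert y_\alpha\vert/w_\alpha\leq\Vert L\Vert$ uniformly in $\alpha$, so $\Vert L_\y\Vert_\w^*\leq\Vert L\Vert<\infty$. Combining the two estimates also yields $\Vert L_\y\Vert=\Vert L_\y\Vert_\w^*$, confirming that $\Vert\cdot\Vert_\w^*$ is genuinely the dual norm.

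I expect this argument to be essentially routine rather than to contain a serious obstacle, since it is a reweighted instance of the standard $\ell_1$–$\ell_\infty$ duality and $\R[\x]$ is dense in the completion by construction. The one point deserving care is the claim that \emph{every} $\ell_\w$-continuous functional arises as some $L_\y$: this is immediate here because $\R[\x]$ is spanned algebraically by the monomials, so the values $y_\alpha:=L(\x^\alpha)$ determine $L$ outright and no limiting or extension argument is needed. (If one instead wanted the dual of the full $\w$-summable Banach space rather than of $\R[\x]$ itself, one would have to invoke that finitely-supported sequences are dense to extend uniquely, but for the statement as written the polynomial case suffices.) The only mild subtlety worth flagging is that the supremum defining $\Vert L_\y\Vert_\w^*$ is achieved as a genuine operator-norm supremum precisely because the monomials $\x^\alpha$ are norming vectors of $\ell_\w$-norm $w_\alpha$, which is what pins the two norms together.
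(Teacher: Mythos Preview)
Your argument is correct and follows essentially the same approach as the paper: both directions are handled by the obvious estimate $\vert L_\y(f)\vert\leq\Vert L_\y\Vert_\w^*\,\Vert f\Vert_\w$ for one inclusion, and by testing on monomials (the paper uses $\x^\alpha/w_\alpha$ of norm $1$, you use $\x^\alpha$ of norm $w_\alpha$, which is the same thing) for the other. Your additional remark that the operator norm equals $\Vert L_\y\Vert_\w^*$ and the framing via the isometry with $(c_{00},\Vert\cdot\Vert_1)$ are fine elaborations but not needed for the statement as written.
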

\begin{proof}
If $L_\y\in \R[\x]^*$ satisfies $\Vert L_\y\Vert_\w^*<\infty$, then
\[\vert L_\y(f)\vert \,\leq\,\sum_{\alpha}\vert f_\alpha\vert \,w_\alpha\,\frac{y_\alpha}{w_\alpha}\,\leq\,\Vert f\Vert_\w\,\Vert L_\y\Vert^*_\w,\]
and so $L_\y$ is bounded, hence $\ell_\w$-continuous. Conversely, if $L_\y$ is $\ell_\w$-continuous, then 
consider the sequence of polynomials $(f_\alpha)\subset\R[\x]$ with
$\x\mapsto f_\alpha(\x):=\x^\alpha/w_\alpha$ for every $\alpha\in\N^n$.
Then $\Vert f_\alpha\Vert _\w=1$ for every $\alpha\in\N^n$,
and $L_\y(f_\alpha)=y_\alpha/w_\alpha$  for all $\alpha\in\N^n$. And so,
if $L_\y$ is $\ell_\w$-continuous then $\sup\{\vert y_\alpha\vert/w_\alpha:\alpha\in\N^n\}<+\infty$.
\end{proof}
\begin{thm}
\label{k-moment}
Let $\K\subseteq\R^n$ be as in (\ref{setk}) and let
$\y=(y_\alpha)$, $\alpha\in\N$, be a given real sequence such that
$L_\y$ is $\ell_\w$-continuous.
Then $\y$ has a finite representing Borel measure $\mu$ on $\K$ if and only if
$L_\y$ is nonnegative on $Q(g)$. Equivalently if and only if:
\begin{equation}
\label{carac}
L_\y(h^2g_j)\geq0\quad\forall\,h\in\R[\x], \:j=0,\ldots,m;\quad \sup_{\alpha\in\N^n} \frac{\vert y_\alpha\vert}{w_\alpha}\,\leq\,M\mbox{ for some $M$.}\end{equation}
\end{thm}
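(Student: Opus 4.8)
The plan is to leverage the two preceding results—Theorem \ref{newlook} and Proposition \ref{dual-of-rx}—so that the bulk of the argument reduces to a dictionary translation between the analytic conditions on $\y$ and the positivity conditions on $L_\y$. The equivalence has three faces: (i) $\y$ has a representing measure on $\K$; (ii) $L_\y\geq0$ on $Q(g)$; (iii) the explicit conditions in (\ref{carac}). I would organize the proof as a short cycle of implications, establishing (i)$\Rightarrow$(ii)$\Rightarrow$(iii)$\Rightarrow$(i), rather than proving each equivalence separately.

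First I would dispatch the easy direction (i)$\Rightarrow$(ii). If $\mu$ is a finite Borel measure supported on $\K$ with $y_\alpha=\int\x^\alpha\,d\mu$, then for any $h\in\R[\x]$ and any generator $g_j$ we have $L_\y(h^2g_j)=\int h^2\,g_j\,d\mu\geq0$, since $g_j\geq0$ on $\K$ and $\mu(\R^n\setminus\K)=0$; summing over a representation $\sum_j\sigma_jg_j\in Q(g)$ gives $L_\y\geq0$ on $Q(g)$. This simultaneously yields the localizing-matrix half of (\ref{carac}) (writing $L_\y(h^2g_j)\geq0$ for all $h$ is equivalent to $\M_d(g_j\,\y)\succeq0$ for all $d$). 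The second half of (\ref{carac})—the bound $|y_\alpha|/w_\alpha\leq M$—is exactly the statement that $\|L_\y\|_\w^*<\infty$, which is our standing hypothesis that $L_\y$ is $\ell_\w$-continuous, now reread through Proposition \ref{dual-of-rx}. The implication (ii)$\Rightarrow$(iii) is then essentially a repackaging: taking $h^2g_j\in Q(g)$ gives the localizing conditions, and $\ell_\w$-continuity gives the norm bound.

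The substantive direction is (iii)$\Rightarrow$(i), and this is where I expect the main obstacle to lie. The norm bound $|y_\alpha|\leq M\,w_\alpha=M\,(2\lceil|\alpha|/2\rceil)\l$ is precisely engineered so that, applied to $\alpha=(0,\ldots,2k,\ldots,0)$, it yields $L_\y(x_i^{2k})\leq M\,(2k)\l$ for every $i$ and every $k$—exactly the growth hypothesis required by Theorem \ref{newlook} (and by the multivariate Carleman condition (\ref{carleman}), since $\sum_k\big((2k)\l\big)^{-1/2k}=\infty$). The condition $L_\y(h^2)\geq0$ (the $j=0$ case, $g_0\equiv1$) gives $\M_d(\y)\succeq0$ for all $d$, and combining positive semidefiniteness of all moment matrices with Carleman's condition produces a finite, determinate representing measure $\mu$ on $\R^n$. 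It then remains to show $\supp\mu\subseteq\K$, equivalently that $g_j\geq0$ on $\supp\mu$ for each $j=1,\ldots,m$. This is exactly where Theorem \ref{newlook} enters: since $L_\y(h^2g_j)\geq0$ for all $h$ translates to $\int h^2g_j\,d\mu\geq0$ for all $h$, and since the growth bound verifies the hypothesis of Theorem \ref{newlook}, we conclude $g_j\geq0$ on $\supp\mu$. As this holds for every $j$, we obtain $\supp\mu\subseteq\K$, so $\mu$ is supported on $\K$ and represents $\y$.

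The delicate point worth flagging is the interplay between the scalar Carleman bound (needed to guarantee determinacy and existence of $\mu$) and the per-generator support condition (needed to localize the support inside $\K$); both must be driven by the \emph{single} weight sequence $w_\alpha$, and it is the specific choice $w_\alpha=(2\lceil|\alpha|/2\rceil)\l$ that makes one hypothesis serve both purposes. Finally, the equivalence with $P(g)$ asserted in the theorem statement follows verbatim: the conditions $L_\y(h^2g_J)\geq0$ for products $g_J$ are again translated via Theorem \ref{newlook} into $g_J\geq0$ on $\supp\mu$, and since each individual $g_j=g_{\{j\}}$ already appears, nonnegativity on $Q(g)$ and on $P(g)$ pin down the same support constraint $\supp\mu\subseteq\K$.
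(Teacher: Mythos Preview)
Your proposal is correct and follows essentially the same route as the paper: necessity via integrating $h^2g_j$ against $\mu$, and sufficiency by using the $\ell_\w$-bound to verify Carleman's condition, obtaining a determinate representing measure on $\R^n$, and then invoking Theorem~\ref{newlook} to force $g_j\geq0$ on $\supp\mu$ so that $\supp\mu\subseteq\K$. The only cosmetic difference is your packaging as a cycle (i)$\Rightarrow$(ii)$\Rightarrow$(iii)$\Rightarrow$(i), whereas the paper treats necessity and sufficiency directly.
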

\begin{proof}
The necessity is clear. Indeed, if $\y$ has a representing measure
$\mu$ on $\K$ then $L_\y(h^2g_j)=\int_\K h^2g_jd\mu\geq0$ for all $h\in\R[\x]$
and all $j=0,1,\ldots,m$; and so $L_\y(f)\geq0$ for all $f\in Q(g)$.
Moreover, $\Vert L_\y\Vert^*_\w<\infty$ because $L_\y$ is $\ell_\w$-continuous; hence (\ref{carac}) holds.

Sufficiency. Suppose that $L_\y$ is a non trivial $\ell_\w$-continuous linear functional, nonnegative on $Q(g)$, i.e.,
suppose that (\ref{carac}) holds. In particular,
$L_\y(x_i^{2k})\leq M(2k)\l$ for every $k\in\N$ and every $i=1,\ldots,n$. Therefore,
$\y$ satisfies Carleman's condition (\ref{carleman}) and since $\M_k(\y)\succeq0$ for all $k\in\N$,
$\y$ has a representing finite Borel measure $\mu$ on $\R^n$, which in addition, is determinate.
Next, using $L_\y(h^2g_j)\geq0$ for all $h\in\R[\x]$, and invoking Theorem \ref{newlook}, one may conclude that
$g_j\geq0$ on $\supmu$, for every $j=1,\ldots,m$. Hence $\supmu\subseteq\K$.
\end{proof}
Theorem \ref{k-moment} states that $Q(g)$ solves the $\K$-moment problem 
for $\ell_\w$-continuous functionals. Of course, Theorem \ref{k-moment} is also true if one replaces the quadratic module
$Q(g)$ with the preordering $P(g)$. 

\subsection*{The $\ell_\w$-closure of $Q(g)$ and $P(g)$}
Observe that $\psd(\K)$ is $\ell_\w$-closed. To see this, notice that with every 
every $\x\in\K$ is associated the Dirac measure 
$\delta_\x$, whose associated sequence $\y=(\x^\alpha)$, $\alpha\in\N^n$, is such that 
$L_\y$ is $\ell_\w$-continuous.
Indeed, let $a:=\max_i\vert \x_i\vert$ so that $\vert\x^\alpha\vert\leq a^{\vert\alpha\vert}$, and let $M:=\exp(a)$.
\[M^{-1}\,\vert y_\alpha\vert \,=\,\exp(a)^{-1} \vert\x^\alpha\vert\,\leq\,\exp(a)^{-1}\,a^{\vert \alpha\vert}\,<\,\alpha\l
\,\leq\,(2\lceil \vert\alpha\vert/2\rceil)\l\,=\,w_\alpha,\]
and so $\Vert L_\y\Vert^*_\w<M$, i..e., $L_\y$ is $\ell_\w$-continuous.
Therefore, let $(f_n)\subset\psd(\K)$ be such that $\Vert f_n-f\Vert_\w\to 0$ as $n\to\infty$.
As $L_\y$ is $\ell_\w$-continuous one must have $0\leq \lim_{n\to\infty}L_\y(f_n)=L_\y(f)=f(\x)$.
As $\x\in\K$ was arbitrary, $f\in\psd(\K)$.

\begin{thm}
\label{thmain-4}
Let $\K\subseteq\R^n$ be as in (\ref{setk}) and recall that $\psd(\K):=\{f\in\R[\x]\,:\,f\geq0\mbox{ on }\K\}$.
Then ${\rm cl}_\w(P(g))={\rm cl}_\w(Q(g))=\psd(\K)$.
\end{thm}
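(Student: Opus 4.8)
The plan is to prove the two inclusions that make up the theorem. Since $P(g)\subseteq Q(g)\subseteq\psd(\K)$ trivially, and since $\psd(\K)$ was just shown to be $\ell_\w$-closed, we get $\mathrm{cl}_\w(P(g))\subseteq\mathrm{cl}_\w(Q(g))\subseteq\psd(\K)$ for free. The entire content of the theorem therefore reduces to showing the single reverse inclusion $\psd(\K)\subseteq\mathrm{cl}_\w(P(g))$; because $P(g)\subseteq Q(g)$, establishing this for the smaller cone $P(g)$ automatically gives it for $Q(g)$ as well, collapsing both closures onto $\psd(\K)$.

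\medskip

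\textbf{Strategy for the reverse inclusion.} The natural route is a Hahn--Banach separation argument carried out in the Banach space $(\R[\x],\Vert\cdot\Vert_\w)$, whose dual was identified in Proposition \ref{dual-of-rx} as the space of $\ell_\w$-continuous functionals $L_\y$ with $\sup_\alpha\vert y_\alpha\vert/w_\alpha<\infty$. First I would suppose, for contradiction, that some $f\in\psd(\K)$ does not lie in the closed convex cone $C:=\mathrm{cl}_\w(P(g))$. Separating the point $f$ from the closed convex cone $C$ by a continuous linear functional yields some $\ell_\w$-continuous $L_\y$ with $L_\y(h)\geq 0$ for every $h\in C$ (in particular $L_\y\geq 0$ on $P(g)$, so a fortiori on $Q(g)$) while $L_\y(f)<0$. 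By Proposition \ref{dual-of-rx} this $L_\y$ satisfies the continuity bound $\vert y_\alpha\vert\leq M\,w_\alpha$, which is exactly the second condition in \eqref{carac}; and nonnegativity on $P(g)$ gives $L_\y(h^2g_J)\geq 0$, hence in particular the first condition of \eqref{carac}. Thus $L_\y$ satisfies all the hypotheses of Theorem \ref{k-moment}, so $\y$ has a finite representing measure $\mu$ supported on $\K$. But then $L_\y(f)=\int_\K f\,d\mu\geq 0$ since $f\in\psd(\K)$, contradicting $L_\y(f)<0$. This contradiction forces $f\in C$, giving $\psd(\K)\subseteq\mathrm{cl}_\w(P(g))$.

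\medskip

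\textbf{Anticipated obstacle.} The delicate point is the separation step itself, not the moment-theoretic consequences, which Theorem \ref{k-moment} packages cleanly. Standard Hahn--Banach separation of a point from a closed convex cone produces a functional $L_\y$ that is continuous and bounded below on the cone; since $P(g)$ is a cone one upgrades this to genuine nonnegativity $L_\y\geq 0$ on $P(g)$ and to strict separation $L_\y(f)<0$ (one can always arrange $L_\y(f)<0$ by choosing the sign, using that $f\notin C$ lies strictly on one side). I would need to verify that the separating functional lies in the topological dual $(\R[\x]^*,\Vert\cdot\Vert^*_\w)$ rather than in some larger algebraic dual---but this is precisely guaranteed because we are separating in the normed space $(\R[\x],\Vert\cdot\Vert_\w)$, so the separating functional is $\Vert\cdot\Vert_\w$-continuous by construction, and Proposition \ref{dual-of-rx} identifies it with a sequence $\y$ obeying the required growth bound. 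Once continuity is secured, the rest is a direct invocation of Theorem \ref{k-moment}, and the argument applies verbatim to $Q(g)$ since it contains $P(g)$.
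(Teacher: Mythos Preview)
Your approach is essentially the paper's: Hahn--Banach separation in $(\R[\x],\Vert\cdot\Vert_\w)$ followed by Theorem~\ref{k-moment} to obtain a representing measure on $\K$ and reach a contradiction. There is, however, one slip: you have the inclusion between preordering and quadratic module reversed. In fact $Q(g)\subseteq P(g)$ (every sum $\sum_{j}\sigma_j g_j$ is trivially of the form $\sum_{J}\sigma_J g_J$ with $\sigma_J=0$ for $\vert J\vert\geq 2$), not $P(g)\subseteq Q(g)$. So the chain of easy inclusions is $\mathrm{cl}_\w(Q(g))\subseteq\mathrm{cl}_\w(P(g))\subseteq\psd(\K)$, and the reduction goes the other way: one proves $\psd(\K)\subseteq\mathrm{cl}_\w(Q(g))$ for the \emph{smaller} cone $Q(g)$, whence the $P(g)$ statement follows. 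This is exactly what the paper does. Your separation argument is perfectly fine once you apply it to $Q(g)$ directly (separating from $\mathrm{cl}_\w(Q(g))$ gives $L_\y\geq 0$ on $Q(g)$, which is precisely the hypothesis of Theorem~\ref{k-moment}); the only change needed is to swap the roles of $P(g)$ and $Q(g)$ in your reduction. Ironically, your ``a fortiori'' step---$L_\y\geq 0$ on $P(g)$ implies $L_\y\geq 0$ on $Q(g)$---is valid precisely because of the true inclusion $Q(g)\subseteq P(g)$, not the one you stated.
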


\begin{proof}
As $\psd(\K)$ is $\ell_\w$-closed and $Q(g)\subset \psd(\K)$, ${\rm cl}_\w(Q(g))\subseteq\psd(\K)$, and so we only have
to prove the reverse inclusion.
Let $f\not\in{\rm cl}_\w(Q(g))$. Since $Q(g)$ is a convex cone, by the Hahn-Banach separation theorem there exists
an $\ell_\w$-continuous linear functional $L_\y$ that strictly separates $f$ from
${\rm cl}_\w(Q(g))$. That is, there exists $\y\in\N^n$ such that $L_\y$ is $\ell_\w$-continuous,
$L_\y(f)<0$ and $L_\y(h)\geq0$ for all
$h\in{\rm cl}_\w(Q(g))$. By Theorem \ref{k-moment}, such a $\y$ has a representing finite Borel measure $\mu$ on $\K$, and so $L_\y(f)=\int_\K fd\mu<0$ yields that necessarily $f(\x_0)<0$ for some $\x_0\in\K$. 
Hence $\psd(\K)\subseteq {\rm cl}_\w(Q(g))$, which in turn yields the desired result.
\end{proof}
For instance, from Berg \cite{berg2}, the $\ell_1$-closure of $\Sigma[\x]$ is $\psd([-1,1]^n)$. On the other hand,
its $\ell_\w$-closure is now $\psd(\R^n)$, which is what we really want.

It is worth mentioning that Theorem \ref{k-moment} and \ref{thmain-4} can be extended to any preordering or 
quadratic module (finitely generated or not). The proof is the same. Any closed set $\K\subset\R^n$ 
may be represented as the non-negativity set 
of such a quadratic module (taking generators of the form
$\x\mapsto g(\x):=\sum_{i=1}^n(x_i-a_i)-r^2$, for suitable $\a\in\R^n$ and $r>0$). However, 
for reasons that become obvious in the next section, the focus of the present paper is on the finitely generated case. 

\section{Canonical $\ell_\w$-projections onto $P_d(g)$ and $Q_d(g)$}

As we next see, the $\ell_\w$- and $\ell_1$-norm have the nice feature 
that one may find particular (canonical) projections onto various truncations of $P(g)$ and $Q(g)$ with a particularly simple expression.
For $\ell_\w$- and $\ell_1$- projections to be well-defined we assume that $\K$ has a nonempty interior.

We first provide an explicit form of  {\it canonical} $\ell_1$- and $\ell_\w$-projection of any given polynomial $f$ onto $P_d(g)$ and $Q_d(g)$ respectively, and
analyze their limit as $d\to\infty$. Then we will consider the projections onto $P_d(g)\cap\R[\x]_s$ for fixed
$s,d\in\N$, which, letting $s\to\infty$, will permit to obtain the projection of $f$
onto $\overline{P(g)\cap\R[\x]_d}$, and so to also characterize the sequential closure $P(g)^{\ddag}$.

As in the previous section, for a polynomial in $\R[\x]_t$ we use indifferently the notation
$h$ for both the polynomial and its vector of coefficients $\h\in\R^{s(t)}$. The context 
will make clear which one of the two is concerned.

Let $\K\subseteq\R^n$ be as in (\ref{setk}) and consider the following optimization problem:
\begin{equation}
\label{inv-1}
p_{d\w}:=\displaystyle\inf_h\,\{\:\Vert f-h\Vert_\w\::\: h\in P_d(g)\:\}.
\end{equation}
That is, one searches for the best $\ell_\w$-approximation of $f$ by an element $h^*$ of $P_d(g)$, or equivalently,
an $\ell_\w$-projection of $f$ onto $P_d(g)$. In general, such a best approximation $h^*\in P_d(g)$ is not unique\footnote{The following example was kindly provided by an anonymous referee:  Let $n=2,d=1$, $\x\mapsto f(\x):=-2x_1x_2$,
and let $C\subset\R[\x]_{2d}$ be the cone of sums of squares of linear polynomials. Then any polynomial $\x\mapsto p_\lambda(\x):=\lambda (x_1-x_2)^2$ with $\lambda\in [0,1]$, is an $\ell_\w$-projection (and an $\ell_1$-projection) of $f$ onto $C$ and $\Vert f-p_\lambda\Vert_\w=2$.}
but we provide a {\it canonical} solution with a very simple expression.
Of course, and even though (\ref{inv-1}) is well defined for an arbitrary $f\in\R[\x]$,
such a problem is of particular interest when $f$ is nonnegative on $\K$ but not necessarily an element of $P(g)$.

\begin{thm}
\label{thmain}
Let $\K\subseteq\R^n$ in (\ref{setk}) be with nonempty interior.
Let $f\in\R[\x]$ and let $2d\geq {\rm deg}\,f$.
There is an $\ell_\w$-projection of $f$ onto $P_d(g)$ which is a polynomial $g^{P\w}_f\in\R[\x]_{2d}$
of the form:
\begin{equation}
\label{best-p}
\x\,\mapsto\: g^{P\w}_{f}(\x)\,:=\,f(\x)+\,\left(\lambda^{P\w}_{0}+\sum_{i=1}^n\sum_{k=1}^d 
\lambda^{P\w}_{ik}\,\frac{x_i^{2k}}{(2k)\l}\,\right),
\end{equation}
where the nonnegative vector $\lambda^{P\w}\in\R^{nd+1}$
is an optimal solution of the semidefinite program:
\begin{equation}
\label{dual3-p}
\displaystyle\inf_{\lambda\geq0}\:\left\{\displaystyle\lambda_0+\sum_{i=1}^n\sum_{k=1}^d\lambda_{ik}\::\:
f+\lambda_0+\displaystyle\sum_{i=1}^n\sum_{k=1}^d\lambda_i\:\frac{x_{ik}^{2k}}{(2k)\l}\in P_d(g)\:\right\},
\end{equation}
and  $p_{d\w}=\Vert f-g^{P\w}_f\Vert_\w=\displaystyle\lambda^{P\w}_0+\sum_{i=1}^n\sum_{k=1}^d\lambda^{P\w}_{ik}$.
\end{thm}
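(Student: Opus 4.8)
The plan is to prove $p_{d\w}$ equals the optimal value of the semidefinite program (\ref{dual3-p}), say $V$, by matching the \emph{convex duals} of the two problems; the crucial point is that the weights $w_\alpha=(2\lceil|\alpha|/2\rceil)\l$ are calibrated so that the two duals have \emph{the same feasible set}. The easy half is a primal comparison: for any feasible $\lambda\ge0$ in (\ref{dual3-p}) the polynomial $h:=f+\lambda_0+\sum_{i,k}\lambda_{ik}x_i^{2k}/(2k)\l$ lies in $P_d(g)$, and since $f-h=-(\lambda_0+\sum_{i,k}\lambda_{ik}x_i^{2k}/(2k)\l)$ while $w_0=1$ and $w_{2ke_i}=(2k)\l$, one reads off $\Vert f-h\Vert_\w=\lambda_0+\sum_{i,k}\lambda_{ik}$. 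Thus every feasible point of (\ref{dual3-p}) yields a competitor of the announced form in (\ref{inv-1}), so $p_{d\w}\le V$. It then remains to prove $p_{d\w}\ge V$ and that the infimum in (\ref{dual3-p}) is attained; together these give the value formula and exhibit $g^{P\w}_f$ as a genuine projection.

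For the reverse inequality I would dualize both problems over the finite-dimensional space $\R[\x]_{2d}$ (note $f\in\R[\x]_{2d}$ since $2d\ge\deg f$, and $P_d(g)$ is a closed convex cone there). Fenchel duality for the distance to a cone gives
\[p_{d\w}=\sup\{\,L_\y(f)\::\:\M_{d-v_J}(g_J\,\y)\preceq0\ \ \forall J,\ \ |y_\alpha|\le w_\alpha\ \ \forall\,|\alpha|\le2d\,\},\]
where $L_\y\le0$ on $P_d(g)$ has been rewritten as $\M_{d-v_J}(g_J\,\y)\preceq0$ and $\Vert L_\y\Vert^*_\w\le1$ as $|y_\alpha|\le w_\alpha$. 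Standard conic duality applied to (\ref{dual3-p}), with Gram matrices $\X_J\succeq0$ encoding the membership in $P_d(g)$ and the scalars $\lambda_0,\lambda_{ik}\ge0$, yields
\[D=\sup\{\,L_\y(f)\::\:\M_{d-v_J}(g_J\,\y)\preceq0\ \ \forall J,\ \ y_0\ge-1,\ \ y_{2ke_i}\ge-(2k)\l\,\}.\]
Both maximize the same objective over the same semidefinite constraints and differ only in the scalar bounds on $\y$. Every $p_{d\w}$-feasible point is $D$-feasible (take $\alpha=0$ and $\alpha=2ke_i$ in $|y_\alpha|\le w_\alpha$), so $p_{d\w}\le D$.

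The heart of the matter is the reverse inclusion, which I would isolate as a lemma: if $\tilde\y:=-\y$ satisfies $\M_d(\tilde\y)\succeq0$ (the case $J=\emptyset$), $\tilde y_0\le1$ and $\tilde y_{2ke_i}\le(2k)\l$ for all $i$ and all $k\le d$, then $|y_\alpha|\le w_\alpha$ for every $|\alpha|\le2d$. This makes every $D$-feasible point $p_{d\w}$-feasible, so the feasible sets coincide and $p_{d\w}=D$. The lemma rests on the Cauchy--Schwarz inequalities $\tilde y_{\mu+\nu}^2\le\tilde y_{2\mu}\tilde y_{2\nu}$ (for $|\mu|,|\nu|\le d$) coming from $\M_d(\tilde\y)\succeq0$, in two steps: (i) writing a general $\alpha=\mu+\nu$ with $|\mu|=\lfloor|\alpha|/2\rfloor$, $|\nu|=\lceil|\alpha|/2\rceil$ reduces matters to bounding the \emph{even} moments $\tilde y_{2\mu}\le(2|\mu|)\l$; (ii) a majorization argument gives $\tilde y_{2\mu}\le\max_i\tilde y_{2|\mu|e_i}$, by repeatedly applying Cauchy--Schwarz to indices obtained by shifting one unit of exponent between two coordinates, thereby pushing all the exponent mass onto a single variable (the exponent vectors becoming more ``spread'', pure powers being maximal for the majorization order), and then invoking $\tilde y_{2|\mu|e_i}\le(2|\mu|)\l$. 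I expect this Cauchy--Schwarz/majorization bookkeeping to be the main technical obstacle, since the recursion must be shown to terminate at the pure powers with the \emph{sharp} constant $(2|\mu|)\l=w_{2\mu}$.

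Finally, to obtain $V=D$ with attainment I would verify Slater's condition for $D$, and here the nonempty interior of $\K$ enters. There is a finite Borel measure $\mu$ with $\supp\mu=\K$; scaling it down, its truncated moment vector $\tilde\y$ satisfies the strict bounds $\tilde y_0<1$ and $\tilde y_{2ke_i}<(2k)\l$, while each localizing matrix $\M_{d-v_J}(g_J\,\tilde\y)=\int g_J\,\v_{d-v_J}\v_{d-v_J}^T\,d\mu\succ0$ is strictly positive definite, because no nonzero $p$ of degree $\le d-v_J$ can satisfy $\int g_J p^2\,d\mu=0$ (as $g_J\ge0$ on $\K$ and $\mu$ charges an open set). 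Hence $D$ is strictly feasible and bounded (by $\Vert f\Vert_\w$, taking $h=0$ in (\ref{inv-1})), so strong duality gives $V=D$ with the infimum in (\ref{dual3-p}) attained at some $\lambda^{P\w}\ge0$. Combining the three identities, $p_{d\w}=D=V=\lambda^{P\w}_0+\sum_{i,k}\lambda^{P\w}_{ik}$, and by the first paragraph the polynomial $g^{P\w}_f$ built from $\lambda^{P\w}$ attains this value, so it is an $\ell_\w$-projection of $f$ onto $P_d(g)$ of the form (\ref{best-p}).
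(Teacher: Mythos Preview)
Your proof is correct and takes essentially the same approach as the paper: both dualize the $\ell_\w$-projection problem, use the moment-matrix inequality $|y_\alpha|\le\max[y_0,\max_i L_\y(x_i^{2k})]$ (which the paper cites as \cite[Lemma~1]{arch} and you re-derive via the Cauchy--Schwarz/majorization sketch) to replace the full box constraints $|y_\alpha|\le w_\alpha$ by the $nd+1$ constraints on $y_0$ and $y_{2ke_i}$, and verify Slater's condition via a measure supported on the interior of $\K$. The only difference is organizational: you dualize the two primal problems separately and show their duals have the same feasible set, whereas the paper writes out the SDP (\ref{inv-3}), passes to its dual (\ref{dual1}), rewrites that dual as (\ref{dual2}) via the cited lemma, and then re-dualizes to obtain (\ref{dual3-p}).
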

\begin{proof}
Consider $f$ as an element of $\R[\x]_{2d}$ by setting $f_\alpha=0$ whenever
$\vert\alpha\vert >{\rm deg}\,f$ (where $\vert\alpha\vert=\sum\alpha_i$),
and rewrite (\ref{inv-1}) as the semidefinite program:

 \begin{equation}
\label{inv-3}
\begin{array}{rl}
p_{d\w}:=\displaystyle\inf_{\lambda,\X_J,h}&\displaystyle\sum_{\alpha\in\N^n_{2d}}w_\alpha\,\lambda_\alpha\\
\mbox{s.t.}& \lambda_\alpha +h_\alpha\geq f_\alpha,\quad\forall \alpha\in\N^n_{2d}\\
& \lambda_\alpha -h_\alpha\geq -f_\alpha,\quad\forall \alpha\in\N^n_{2d}\\
&h_\alpha-\displaystyle\sum_{J\subseteq\{1,\ldots,m\}}^m\la\X_J,\B^J_\alpha\ra =0,\quad\forall \alpha\in\N^n_{2d}\\
&\lambda\geq0;\:h\in\R[\x]_{2d};\:\X_J\succeq0,\:\forall J\subseteq\{1,\ldots,m\}.
\end{array}
\end{equation}
The dual semidefinite program of (\ref{inv-3}) reads:

\begin{equation}
\label{dual}
\left\{\begin{array}{rrll}
\displaystyle\sup_{\u,\v\geq 0,\y}&\displaystyle
\sum_{\alpha\in\N^n_{d}}f_\alpha(u_\alpha-v_\alpha)&&\\
\mbox{s.t.}&u_\alpha+v_\alpha&\leq \,w_\alpha&\forall\alpha\in\N^n_{2d}\\
&u_\alpha-v_\alpha+y_\alpha &\,=\,0&\forall\alpha\in\N^n_{2d},\\
&\M_d(g_J\,\y)&\succeq\,0,&\forall J\subseteq\{1,\ldots,m\},
\end{array}\right.
\end{equation}
or, equivalently,
\begin{equation}
\label{dual1}
\left\{\begin{array}{ll}
\displaystyle\sup_{\y}&-L_\y(f)\\
\mbox{s.t.}&\M_d(g_J\,\y)\,\succeq\,0,\qquad\forall J\subseteq\{1,\ldots,m\}\\
&\vert y_\alpha\vert \,\leq\,w_\alpha,\quad\forall\alpha\in\N^n_{2d}.
\end{array}\right.
\end{equation}
The semidefinite program (\ref{dual1}) has an optimal solution $\y^*$ because the feasible set is nonempty and compact.
In addition, 
let $\y=(y_\alpha)$ be the moment sequence of
the finite Borel measure $\mu(B)=\int_{\K\cap B}\e^{-\Vert\x\Vert^2}d\x$, for all $B\in\mathcal{B}$, scaled so that $\vert y_\alpha\vert <w_\alpha$ for all $\alpha\in\N^n_{2d}$.
Then $(\y,\u,\v)$ with $\u=-\min[\y,0]$ and $\v=\max[\y,0]$, is strictly feasible in (\ref{dual}). Indeed,
as $\K$ has nonempty interior, then necessarily $\M_d(g_J\,\y)\succ 0$ for all $J\subseteq\{1,\ldots,m\}$,
and so Slater's condition\footnote{Slater's condition holds for the conic optimization problem $\P:\:\inf_\x \{\c'\x\,:\, \A\x=\b;\:\x\in\K\}$, where $\K\subset\R^n$ is a convex cone and $\A\in\R^{p\times n}$, if
there exists a feasible solution $\x_0\in{\rm int}\,\K$.
In this case, there is no duality gap
between $\P$ and its dual $\P^*:\:\sup_\z\{\b'\z\,:\,\c-\A'\z\in\K^*\}$. 
In addition, if the optimal value is bounded then $\P^*$ has an optimal solution.}
holds for (\ref{dual}). 
Therefore, by a standard duality result in convex optimization, there is no duality gap
between (\ref{inv-3}) and (\ref{dual}) (or (\ref{dual1})), and (\ref{inv-3}) has an optimal solution $(\lambda^*,(\X^*_j),g^P_f)$.
Hence $p_{d\w}=-L_{\y^*}(f)$ for any optimal solution $\y^*$ of (\ref{dual1}).

Next, recall that with $J:=\emptyset$, $\M_d(g_\emptyset\,\y)=\M_d(\y)$.
Moreover, $\M_d(\y)\succeq0$ implies $\M_k(\y)\succeq0$ for all $k\leq d$.
By \cite[Lemma 1]{arch}, $\M_k(\y)\succeq0$ implies that
$\vert y_\alpha\vert\leq\max[L_\y(1),\max_iL_\y(x_i^{2k})]$, for every $\alpha\in\N^n_{2k}$,
and all $k\leq d$. Therefore,
(\ref{dual1}) has the equivalent formulation
\begin{equation}
\label{dual2}
\left\{\begin{array}{rrl}
p_d=-\displaystyle\inf_{\y}&L_\y(f))&\\
\mbox{s.t.}&\M_d(g_J\,\y)&\succeq\,0,\quad \forall J\subseteq\{1,\ldots,m\}\\
&L_\y(1)&\leq\,1\\
&L_\y(x_i^{2k})&\leq\,(2k)\l,\quad k=1,\ldots,d;\:  i=1,\ldots,n.
\end{array}\right.
\end{equation}
Indeed, any feasible solution of (\ref{dual2}) 
satisfies 
\[\vert y_\alpha\vert\leq\max[L_\y(1),\max_iL_\y(x_i^{2k})]\leq (2k)\l\,=\,w_\alpha,\]
for every $\alpha$ with $\vert\alpha\vert =2k$ and $2k-1$, $k=1,\ldots,d$. The dual of (\ref{dual2})
is exactly the semidefinite program (\ref{dual3-p}).
Again Slater's condition holds for (\ref{dual2}) and it has an optimal solution $\y^*$.
Therefore (\ref{dual3-p}) also has an optimal solution $\lambda^{P\w}\in\R^{nd+1}_+$ with
$p_{d\w}=\lambda_0^{\P\w}+\sum_{i=1}^n\sum_{k=1}^d\lambda_{ik}^{P\w}$, which is the desired result.
\end{proof}
The polynomial $g^{P\w}_{f}\in P_d(g)$ in (\ref{best-p}) is what we call the canonical $\ell_\w$-projection of $f$ onto $P_d(g)$.

Of course, all statements in Theorem \ref{thmain} remain true if one replaces $P_d(g)$ with $Q_d(g)$.
Moreover, if $w_\alpha=1$ for all $\alpha$ (and so
$\ell_\w$ is now the usual $\ell_1$-norm) the polynomial $g_f^{P\w}$ in (\ref{best-p}) simplifies and is of the form:
\begin{equation}
\label{simplify}
\x\,\mapsto\: g^P_f(\x)\,:=\,f(\x)+\,(\lambda^P_0+\sum_{i=1}^n \lambda^P_i\,x_i^{2d}),\end{equation}
for some nonnegative vector $\lambda^P\in\R^{n+1}$.
If $\K=\R^n$ then $g^P_f$ is the canonical $\ell_1$-projection of $f$ onto the cone of s.o.s. polynomials., as illustrated in the following example.
\begin{ex}
{\rm Let $n=2$ and $\K=\R^2$, in which case
$P_d(g)=Q_d(g)=\Sigma[\x]_d$ for all $d\in\N$. Consider the Motzkin-like polynomial\footnote{Computation was made by running the GloptiPoly software \cite{didier}
dedicated to solving the generalized problem of moments.}  $\x\mapsto f(\x)=x_1^2x_2^2(x_1^2+x_2^2-1)+1/27$ of degree $6$, which is nonnegative but not  
a s.o.s., and with a global minimum $f^*=0$ attained at  4 global minimizers $\x^*=(\pm (1/3)^{1/2}, \pm (1/3)^{1/2})$.
The results are displayed in Table \ref{tab1} for $d=3,4,5$.

\begin{table}[ht]
\label{tab1}
\begin{center}
\begin{tabular}{|| l | l | l ||}
\hline
$d$ & $\lambda^*$  & $p_d$ \\
\hline
\hline
$3$ &  $\approx 10^{-3}\,(5.445,  5.367 , 5.367)$ & $\approx 1.6\, 10^{-2}$\\
$4$& $\approx 10^{-4}\,(2.4 ,  9.36 , 9.36)$ &$\approx 2.\,10^{-3}$\\
$5$& $\approx 10^{-5}\,(0.04 ,  4.34, 4.34)$ &$\approx 8.\,10^{-5}$\\
\hline 
\end{tabular}
\end{center}
\caption{Best $\ell_1$-approximation for the Motzkin polynomial.}
\end{table}
}
\end{ex}

\subsection{Canonical $\ell_\w$-projection onto $P(g)_t\cap\R[\x]_{2d}$ and $Q(g)_t\cap\R[\x]_{2d}$}

We now consider $\ell_\w$-projection of $f$ onto $P_t(g)\cap\R[\x]_{2d}$ for given
integers $d,t\in\N$, i.e.,
\begin{equation}
\label{inv-111}
p^{d\w}_t:=\displaystyle\inf_g\,\{\:\Vert f-g\Vert_\w\::\: g\in P_t(g)\,\cap\,\R[\x]_{2d}\:\}.
\end{equation}
In other words, we are interested in searching for
a polynomial of degree $2d$ in $P_t(g)$ which is the closest to $f$ for the $\ell_\w$-norm. 
For instance, if $2d={\rm deg}\,f$, one wishes to find an $\ell_\w$-projection onto $P_t(g)$
of same degree as $f$. One may also consider the analogue problem with
the quadratic module, i.e., an $\ell_\w$-projection onto $Q_t(g)\cap\R[\x]_{2d}$.
We also analyze the limit as $t\to\infty$. 

\begin{thm}
\label{thmain-2}
Let $\K\subseteq\R^n$ in (\ref{setk}) be with nonempty interior and let $d\in\N$.
Let $f\in\R[\x]$ and let $2t\geq \max[2d,{\rm deg}\,f]$.
There is an $\ell_\w$-projection of $f$ onto $P_t(g)\cap\R[\x]_{2d}$ 
which is a polynomial $g^{P\w}_f\in\R[\x]_{2d}$ of the form:
\begin{equation}
\label{best-p-2w}
\x\,\mapsto\: g^{P\w}_f(\x)\,=\,f(\x)+\,\left(\lambda^{P\w}_0+\sum_{i=1}^n\sum_{k=1}^d \lambda^{P\w}_{ik}\,\frac{x_i^{2d}}{(2k)\l}\,\right),
\end{equation}
where the nonnegative vector $\lambda^{P\w}\in\R^{nd+1}$
is an optimal solution of the semidefinite program:
\begin{equation}
\label{dual3-p-2w}
p^{d\w}_t\,=\,\displaystyle\inf_{\lambda\geq0}\:\left\{\displaystyle\lambda_0+\sum_{i=1}^n\sum_{k=1}^d\lambda_{ik}\::\:
f+\lambda_0+\displaystyle\sum_{i=1}^n\sum_{k=1}^d\lambda_{ik}\,\frac{x_i^{2d}}{(2k)\l}\in P_t(g)\:\right\},
\end{equation}
and $p^{d\w}_t=\Vert f-g^P
{P\w}_f\Vert_\w=\lambda^{P\w}_0+\displaystyle\sum_{i=1}^n\sum_{k=1}^d\lambda^{P\w}_{ik}$.
\end{thm}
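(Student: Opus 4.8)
The plan is to reproduce, step for step, the semidefinite‑duality argument of Theorem \ref{thmain}, the one genuinely new feature being that the output is confined to $\R[\x]_{2d}$ while the certificate order $t$ is allowed to exceed $d$. First I would view $f$ as a member of $\R[\x]_{2d}$ (setting $f_\alpha=0$ for $\vert\alpha\vert>{\rm deg}\,f$) and recast (\ref{inv-111}) as a semidefinite program: minimise $\sum_{\alpha\in\N^n_{2d}}w_\alpha\lambda_\alpha$ over $\lambda\geq0$, over symmetric matrices $\X_J\succeq0$ (encoding the s.o.s.\ weights $\sigma_J\in\Sigma[\x]_{t-v_J}$), and over the coefficient vector $g$, subject to $\lambda_\alpha\pm g_\alpha\geq\pm f_\alpha$ and $g_\alpha=\sum_J\la\X_J,\B^J_\alpha\ra$ for $\alpha\in\N^n_{2d}$, where the $\B^J_\alpha$ are the order-$(t-v_J)$ analogues of (\ref{calpha}). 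The genuinely new constraints are $\sum_J\la\X_J,\B^J_\alpha\ra=0$ for $2d<\vert\alpha\vert\leq2t$, which impose $g\in\R[\x]_{2d}$ even though the representation $\sum_J\sigma_Jg_J$ has a priori degree $2t$.

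Next I would form the dual. As in (\ref{dual1}) the box variables yield $\vert y_\alpha\vert\leq w_\alpha$, but now \emph{only} for $\alpha\in\N^n_{2d}$, while the new equalities contribute \emph{free} multipliers $y_\alpha$ for $2d<\vert\alpha\vert\leq2t$; the matrix blocks give $\M_{t-v_J}(g_J\,\y)\succeq0$ for all $J$. Thus the dual is $\sup_\y\{-L_\y(f):\M_{t-v_J}(g_J\,\y)\succeq0\ \forall J,\ \vert y_\alpha\vert\leq w_\alpha\ \forall\alpha\in\N^n_{2d}\}$. Unlike in Theorem \ref{thmain} this feasible set is \emph{not} compact (the high-order moments are unconstrained), so I cannot invoke compactness; instead I would get attainment and the absence of a duality gap through a Slater point, exactly as before: take $\y$ the suitably scaled moment sequence of $\mu(B)=\int_{\K\cap B}\e^{-\Vert\x\Vert^2}d\x$ and use that the nonempty interior of $\K$ forces $\M_{t-v_J}(g_J\,\y)\succ0$ for every $J$. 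Since $f\in\R[\x]_{2d}$, the objective $-L_\y(f)$ depends only on the moments $y_\alpha$, $\vert\alpha\vert\leq2d$, which the box constraint keeps bounded, so the value is finite; hence strong duality applies and the original program attains its optimum.

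Finally I would simplify the box constraint through \cite{arch}: since $\M_d(\y)$ is a principal submatrix of $\M_t(\y)=\M_{t-v_\emptyset}(\y)\succeq0$ (available because $t\geq d$), one has $\vert y_\alpha\vert\leq\max[L_\y(1),\max_iL_\y(x_i^{2k})]$, so on $\N^n_{2d}$ the box constraints $\vert y_\alpha\vert\leq w_\alpha$ are equivalent, for the optimisation, to $L_\y(1)\leq1$ and $L_\y(x_i^{2k})\leq(2k)\l$, $k=1,\ldots,d$. The same Slater point keeps this reformulated program strictly feasible, so its dual attains. Dualising produces precisely (\ref{dual3-p-2w}): stationarity in the free moments $y_\alpha$, $\vert\alpha\vert\leq2t$, forces the certificate $\sum_J\sigma_Jg_J\in P_t(g)$ to equal $f$ plus a nonnegative combination of $1$ and the $x_i^{2k}/(2k)\l$, $k=1,\ldots,d$; as every such term has degree at most $2d$, the equations forcing the coefficients of degree $2d<\vert\alpha\vert\leq2t$ to vanish are met automatically, so this certificate lies in $P_t(g)\cap\R[\x]_{2d}$ and is the canonical projection (\ref{best-p-2w}). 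The objective $\lambda^{P\w}_0+\sum_{i,k}\lambda^{P\w}_{ik}$ then coincides with $\Vert f-g^{P\w}_f\Vert_\w$ because $w_0=1$ and $w_\beta=(2k)\l$ for $\vert\beta\vert=2k$, so each factorial exactly cancels its weight. The hard part will be the bookkeeping of the free high-order dual moments: I must check that restricting the box bound to $\N^n_{2d}$ while raising the certificate order to $t$ still admits the same Slater point, and that the bidual pins the projection to the advertised support without leaking terms of degree between $2d+1$ and $2t$.
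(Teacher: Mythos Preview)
Your proposal is correct and follows essentially the same route as the paper's own proof: reformulate (\ref{inv-111}) as the semidefinite program (\ref{inv-33}), identify its dual (\ref{dual11}) with the box bound restricted to $\N^n_{2d}$, invoke the Slater point coming from the Gaussian-weighted measure on $\K$ to get strong duality and primal attainment, and then pass to the equivalent constrained form via \cite{arch} and dualise once more to obtain (\ref{dual3-p-2w}). You have also correctly isolated the one genuine difference from Theorem~\ref{thmain}---the loss of compactness in the dual feasible set, hence the loss of guaranteed dual attainment---which is exactly the point the paper flags.
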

\begin{proof}
The proof is almost a verbatim copy of that of Theorem \ref{thmain} with a slight
difference. For instance, (\ref{inv-3}) is now replaced with
 \begin{equation}
\label{inv-33}
\begin{array}{rl}
p^{d\w}_t:=\displaystyle\inf_{\lambda,\X_J,h}&\displaystyle\sum_{\alpha\in\N^n_{2d}}w_\alpha\,\lambda_\alpha\\
\mbox{s.t.}& \lambda_\alpha +h_\alpha\geq f_\alpha,\quad\forall \alpha\in\N^n_{2d}\\
& \lambda_\alpha -h_\alpha\geq -f_\alpha,\quad\forall \alpha\in\N^n_{2d}\\
&h_\alpha-\displaystyle\sum_{J\subseteq\{1,\ldots,m\}}^m\la\X_J,\B^J_\alpha\ra =0,\quad\forall \alpha\in\N^n_{2t}\\
&\lambda\geq0;\:h\in\R[\x]_{2d};\:\X_J\succeq0,\:\forall J\subseteq\{1,\ldots,m\}.
\end{array}
\end{equation}
(where $h_\alpha=0$ whenever $\vert\alpha\vert>2d$) and the dual (\ref{dual1}) now reads
\begin{equation}
\label{dual11}
\left\{\begin{array}{ll}
\displaystyle\sup_{\y}&-L_\y(f)\\
\mbox{s.t.}&\M_t(g_J\,\y)\,\succeq\,0,\qquad\forall J\subseteq\{1,\ldots,m\}\\
&\vert y_\alpha\vert \,\leq\,w_\alpha,\quad\forall\alpha\in\N^n_{2d}.
\end{array}\right.
\end{equation}
Again with exactly the same arguments, (\ref{inv-33}) has a feasible solution and
(\ref{dual11}) has a strictly feasible solution $\y$, and so Slater's condition holds for (\ref{dual11}),
which in turn implies that there is no duality gap between (\ref{inv-33}) and (\ref{dual11}), and 
(\ref{inv-33}) has an optimal solution $(\lambda,(\X_J),g_f^{P\w})$. 
However (and this is the only difference with the proof of Theorem \ref{thmain}) now one cannot guarantee any more that (\ref{dual11}) has an optimal solution because $\vert y_\alpha\vert\leq w_\alpha$ only for
$\alpha\in\N^n_{2d}$ and not for all $\alpha\in\N^n_{2t}$.
\end{proof}
As before, we call $g^{P\w}_f$ in (\ref{best-p-2w}) the canonical $\ell_\w$-projection of $f$ onto 
$P_t(g)\cap\R[\x]_{2d}$.
Of course, an analogue of Theorem \ref{thmain-2} (with obvious {\it ad hoc} adjustments) holds for the
canonical $\ell_\w$-projection $g^{Q\w}_f$ onto $Q_t(g)\cap\R[\x]_{2d}$. Also and again, if $w_\alpha=1$
for all $\alpha\in\N^n_{2d}$,
then the polynomial $g^{P\w}_f$ in (\ref{best-p-2w}) simplifies to the form in (\ref{simplify}).

We next analyze the behavior of $g_f^{P\w}$ as $t\to\infty$ to obtain the canonical $\ell_\w$-projection of $f$ onto
$\overline{P(g)\cap\R[\x]_{2d}}$. Recall that
\[\overline{P(g)\cap\R[\x]_{2d}}\,=\,\overline{\left(\bigcup_{t\geq0}P_t(g)\right)\cap\R[\x]_{2d}}\,=\,\overline{\bigcup_{t\geq0}P_t^d(g)}.\]

\begin{cor}
\label{cor-2}
Let $\K\subseteq\R^n$ be as in (\ref{setk}) and with nonempty interior, $f\in\R[\x]_{2d}$, and let $g^{P\w}_f(t)\in
P_t(g)\cap\R[\x]_{2d}$ be an optimal solution of (\ref{inv-111}).
Then there is an $\ell_\w$-projection of $f$ onto $\overline{P(g)\cap\R[\x]_{2d}}$
which is a polynomial $g^\w_f\in\R[\x]_{2d}$ of the form
\begin{equation}
\label{best-p-22}
\x\,\mapsto\: g^\w_f(\x)\,=\,f(\x)+\,\left(\lambda^*_0+\sum_{i=1}^n \sum_{k=1}^d\lambda^*_{ik}\,x_i^{2d}\,\right),
\end{equation}
for some nonnegative vector $\lambda^*\in\R^{dn+1}$.
In particular, if $\K$ is compact  and $f\geq0$ on $\K$ then $\lambda^*=0$ and $g^\w_f=f$.
\end{cor}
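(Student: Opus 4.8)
The plan is to obtain $g^\w_f$ as a subsequential limit of the canonical $\ell_\w$-projections $g^{P\w}_f(t)$ onto $P_t(g)\cap\R[\x]_{2d}$, exploiting that these all live in the fixed finite-dimensional space $\R[\x]_{2d}$. By Theorem \ref{thmain-2}, each $g^{P\w}_f(t)$ is of the form (\ref{best-p-2w}) with a nonnegative vector $\lambda^{P\w}(t)\in\R^{nd+1}$ whose entries sum to the optimal value $p^{d\w}_t$ of (\ref{inv-111}). Since the zero polynomial is feasible for (\ref{inv-111}), one has $p^{d\w}_t\leq\Vert f\Vert_\w$, so the family $(\lambda^{P\w}(t))_t$ is bounded in $\R^{nd+1}_+$. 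By Bolzano--Weierstrass there is a sequence $t_\ell\to\infty$ with $\lambda^{P\w}(t_\ell)\to\lambda^*\in\R^{nd+1}_+$, and hence $g^{P\w}_f(t_\ell)\to g^\w_f$, where $g^\w_f$ is precisely the polynomial (\ref{best-p-22}) once the constants $1/(2k)\l$ are absorbed into $\lambda^*$. As each $g^{P\w}_f(t_\ell)\in P_{t_\ell}(g)\cap\R[\x]_{2d}\subseteq\bigcup_t P_t^d(g)$, the limit $g^\w_f$ lies in $\overline{P(g)\cap\R[\x]_{2d}}$.

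It remains to check that $g^\w_f$ is an actual projection, i.e. that $\Vert f-g^\w_f\Vert_\w=\inf\{\Vert f-g\Vert_\w:g\in\overline{P(g)\cap\R[\x]_{2d}}\}=:p^{d\w}_\infty$. First I would record that $t\mapsto p^{d\w}_t$ is nonincreasing, because $P_t(g)\subseteq P_{t+1}(g)$ enlarges the feasible set of (\ref{inv-111}); being bounded below by $0$, it converges. Since $P_t(g)\cap\R[\x]_{2d}\subseteq\overline{P(g)\cap\R[\x]_{2d}}$ one has $p^{d\w}_t\geq p^{d\w}_\infty$, whence $\lim_t p^{d\w}_t\geq p^{d\w}_\infty$. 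Conversely, any $g\in\overline{P(g)\cap\R[\x]_{2d}}$ is a limit of elements $g_k\in P_{s_k}(g)\cap\R[\x]_{2d}$, and $\Vert f-g_k\Vert_\w\geq p^{d\w}_{s_k}\geq\lim_t p^{d\w}_t$; letting $k\to\infty$ and then taking the infimum over $g$ gives $p^{d\w}_\infty\geq\lim_t p^{d\w}_t$. Therefore $\lim_t p^{d\w}_t=p^{d\w}_\infty$, and since on the finite-dimensional space $\R[\x]_{2d}$ the $\ell_\w$-norm topology coincides with the Euclidean one used to define $\overline{\cdot}$, continuity of the norm yields $\Vert f-g^\w_f\Vert_\w=\lim_\ell\Vert f-g^{P\w}_f(t_\ell)\Vert_\w=\lim_\ell p^{d\w}_{t_\ell}=p^{d\w}_\infty$, as required.

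For the final assertion, suppose $\K$ is compact and $f\geq0$ on $\K$. For every $\epsilon>0$ the polynomial $f+\epsilon$ is strictly positive on the compact set $\K$, so by Schm\"udgen's Positivstellensatz \cite{schmudgen} it belongs to $P(g)$; as $\deg(f+\epsilon)\leq 2d$, in fact $f+\epsilon\in P(g)\cap\R[\x]_{2d}$. Letting $\epsilon\downarrow0$ shows $f\in\overline{P(g)\cap\R[\x]_{2d}}$, so that $p^{d\w}_\infty=0$ is attained at $f$ itself; consequently the canonical projection satisfies $g^\w_f=f$ and $\lambda^*=0$.

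The step I expect to be the main obstacle is the identification $\lim_t p^{d\w}_t=p^{d\w}_\infty$, i.e. commuting the limit in $t$ (projection onto the growing cones $P_t^d(g)$) with the passage to the closure of their union. The nonuniqueness of projections is precisely why one can only extract a convergent subsequence of the $\lambda^{P\w}(t)$ and must then argue separately that its limit still attains the infimum over the entire closed cone.
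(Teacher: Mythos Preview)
Your proof is correct and follows essentially the same approach as the paper: extract a convergent subsequence of the bounded vectors $\lambda^{P\w}(t)\in\R^{nd+1}_+$ to obtain the limit $g^\w_f$, then verify optimality by comparing with an arbitrary element of $\overline{P(g)\cap\R[\x]_{2d}}$ approximated from within some $P_{s_k}(g)\cap\R[\x]_{2d}$, and finally invoke Schm\"udgen for the compact case. The only cosmetic difference is that the paper frames the optimality step as a proof by contradiction (assume $h$ with strictly smaller distance and derive $\Vert f-g^\w_f\Vert_\w>\Vert f-g^\w_f\Vert_\w$), whereas you prove directly that $\lim_t p^{d\w}_t=p^{d\w}_\infty$; the underlying triangle-inequality computation is identical.
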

\begin{proof}
Let $(p^{d\w}_t)$, $t\in\N$, be the sequence of optimal values of (\ref{inv-111}), which is nonnegative and monotone non increasing. Therefore it converges to some nonnegative value $p^{d\w}\geq0$.
For every $t\in\N$, (\ref{inv-111}) has an optimal solution $g^{P\w}_f(t)$ of the form
\begin{equation}
\label{aux}
\x\mapsto g_f^{P\w}(t)(\x)\,=\,f(\x)+\lambda_0^{P\w}(t)+\sum_{i=1}^n\sum_{k=1}^d\lambda^{P\w}_{ik}(t)\frac{x_i^{2k}}{2k\l},\qquad\forall\x\in\R^n,\end{equation}
with $\lambda^{P\w}(t)\geq0$ and $\sum_{i,k}\lambda^{P\w}_{ik}(t)=p^{d\w}_t\leq p^{d\w}_{t_0}$.
Hence there is a subsequence $(t_j)$, $j\in\N$, and some nonnegative vector $\lambda^*\in\R^{nd+1}_+$
such that $\lambda^{P\w}(t_j)\to\lambda^*$ as $j\to\infty$. In addition, 
\[p^{d\w}\,=\,\lim_{j\to\infty}p^{d\w}_{t_j}\,=\,\lim_{j\to\infty}\lambda_0^{P\w}(t_j)+
\sum_{i=1}^n\sum_{k=1}^d\lambda_{ik}^{P\w}(t_j)
\,=\,\lambda_0^*+\sum_{i=1}^n\sum_{k=1}^d\lambda_{ik}^*.\]
Hence $g^{P\w}_f(t_j)\to g^\w_f\in\R[\x]_{2d}$ as $j\to\infty$ where $g^\w_f$ is as in (\ref{best-p-22}).
And of course, as $\Vert g^{P\w}_f(t)-g^\w_f\Vert_\w\to0$, $g^\w_f$ is in the closure 
$\overline{P(g)\cap\R[\x]_{2d}}$ of $P(g)\cap\R[\x]_{2d}$.

Next, suppose that there exists $h\in\overline{P(g)\cap\R[\x]_{2d}}$ with $\Vert f-h\Vert_\w<\Vert f-g^\w_f\Vert_\w$.
Then there exists a sequence $(h_t)\subset\R[\x]_{2d}$, $t\in\N$, with
$h_t\in P^d_t(g)$ such that $\Vert h_t-h\Vert_\w\to 0$ as $t\to\infty$. But then
\[\Vert f-h\Vert_\w\,=\,\Vert f-h_t+h_t-h\Vert_\w\,\geq\,\underbrace{\Vert f-h_t\Vert_\w}_{\geq\Vert f-g^{P\w}_f(t)\Vert_\w}-
\underbrace{\Vert h_t-h\Vert_\w}_{\to0 \mbox{ as }t\to\infty},\qquad\forall\,t,\]
and so taking limit as $t\to\infty$ yields the contradiction
\[\Vert f-g^\w_f\Vert_\w\,>\,\Vert f-h\Vert_\w\,\geq\,\lim_{t\to\infty}\Vert f-g^{P\w}_f(t)\Vert_\w\,=\,\Vert f-g^\w_f\Vert_\w.\]
Therefore $\Vert f-g^\w_f\Vert_\w=\min_{h}\,\{\Vert f-h\Vert_\w\,:\,h\in\overline{P(g)\cap\R[\x]_{2d}}\}$.

The last statement (when $\K$ is compact) follows from Schm\"udgen's Positivstellensatz \cite{schmudgen} which implies that if $f$ is nonnegative on $\K$ then $f+\epsilon\in P(g)$ 
for every $\epsilon>0$.
\end{proof}
Of course there is an analogue of Corollary \ref{cor-2} with $Q(g)$ in lieu of $P(g)$. The only change is concerned with the last statement where one needs that $Q(g)$ is Archimedean. And also, if $w_\alpha=1$ for every $\alpha\in\N^n$,
then $g^\w_f$ in (\ref{best-p-22}) simplifies to the form (\ref{simplify}).

\subsection{The sequential closures of $P(g)$ and $Q(g)$}

Recall that for any convex cone $A\subset\R[\x]$
\begin{equation}
\label{seq-closure}
A^{\ddag}\,=\,\{f\in\R[\x]\::\:\exists\,q\,\in\R[\x] \mbox{ s.t. } f+\epsilon\,q\in A,\quad\forall \epsilon>0\:\}.\end{equation}
We have seen that
$P(g)\subset P(g)^{\ddag}\subseteq\overline{P(g)}$,
where $\overline{A}$ denotes the closure of $A$ for the finest locally convex topology.
Therefore, it is of particular interest to describe $P(g)^{\ddag}$, which the goal of this section. We know that
\begin{equation}
\label{closures}
P(g)^{\ddag}\,=\,\bigcup_{d\in\N}\overline{P(g)\cap\R[\x]_d},\end{equation}
and if for instance $P(g)$ has an algebraic interior then $\overline{P(g)}=P(g)^{\ddag}$.
(See e.g. Kuhlmann and Marshall \cite[Prop. 1.4]{salma} and Cimpric et al. \cite[Prop. 1.3]{cimpric}.)

Notice that  $Q(g)^{\ddag}=\bigcup_{d\in\N}\overline{Q(g)\cap\R[\x]_d}$ (\cite{salma})
and by \cite[Prop. 1.3]{cimpric}, we also have
$\overline{Q(g)}=Q(g)^{\ddag}$ if $Q(g)$ is archimedean. 

\begin{thm}
\label{th2}
Assume that $\K\subseteq\R^n$ in (\ref{setk}) has a nonempty interior and let $f\in\R[\x]$. Then:

{\rm (a)}  $f\in P(g)^{\ddag}$ if and only if there is some $d\in\N$ such that for every $\epsilon >0$, 
the polynomial 
\begin{equation}
\label{polq}
\x\mapsto f(\x)+\epsilon \left(1+\sum_{i=1}^nx_i^{2d}\right)\quad\mbox{is in $P(g)$.}
\end{equation}

{\rm (b)} The same statement as (a) also holds with $Q(g)$ instead of $P(g)$.

In other words, $q\in\R[\x]$ in (\ref{seq-closure}) for $A=P(g)$ can be taken
as $\x\mapsto (1+\sum_{i=1}^n x_i^{2d})$ independently of $\K$. The dependence of $q$ on $f$ is through the power $d$ only.
\end{thm}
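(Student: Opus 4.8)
The plan is to prove the two implications of (a) separately and then to obtain (b) by running the identical argument with the quadratic-module analogue of Corollary~\ref{cor-2}. Throughout I abbreviate $\theta_d:=1+\sum_{i=1}^n x_i^{2d}$ and record the elementary but decisive fact that $1=1^2$ and $x_i^{2d}=(x_i^d)^2$ are squares, so that $1,\,x_i^{2d}\in\Sigma[\x]\subseteq P(g)$ for every $i$ and $d$. The ``if'' implication is then immediate: given $d$ with $f+\epsilon\,\theta_d\in P(g)$ for all $\epsilon>0$, I simply take $q:=\theta_d$ in the description (\ref{seq-closure}) of the sequential closure to conclude $f\in P(g)^{\ddag}$.

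For the ``only if'' implication I would start from $f\in P(g)^{\ddag}$ and use (\ref{closures}) to fix an integer $d$ with $2d\geq{\rm deg}\,f$ and $f\in\overline{P(g)\cap\R[\x]_{2d}}$. The key step is to feed this $d$ and $f$ into Corollary~\ref{cor-2}: the projection value $p^{d\w}$ there equals the $\ell_\w$-distance from $f$ to the closed cone $\overline{P(g)\cap\R[\x]_{2d}}$, which is $0$ since $f$ lies in that cone. As $p^{d\w}=\lim_t p^{d\w}_t$ with $(p^{d\w}_t)_t$ nonincreasing, this forces $p^{d\w}_t\to0$. Fixing an arbitrary $\epsilon>0$ and picking $t$ with $p^{d\w}_t<\epsilon$, the representation of $p^{d\w}_t$ furnished by Theorem~\ref{thmain-2} (formula (\ref{dual3-p-2w})) provides a nonnegative $\lambda$ with $\lambda_0+\sum_{i,k}\lambda_{ik}<\epsilon$ and
\[
h\;:=\;f+\lambda_0+\sum_{i=1}^n\sum_{k=1}^d\lambda_{ik}\,\frac{x_i^{2d}}{(2k)\l}\;\in\;P_t(g)\subseteq P(g).
\]

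To finish I would ``fill up'' $h$ to the symmetric polynomial $f+\epsilon\,\theta_d$. Setting $a_0:=\lambda_0$ and $a_i:=\sum_{k=1}^d\lambda_{ik}/(2k)\l$, one has $0\leq a_0\leq\epsilon$ and $0\leq a_i\leq\sum_k\lambda_{ik}\leq\epsilon$ because $(2k)\l\geq1$; hence, using $1,x_i^{2d}\in P(g)$ and the fact that $P(g)$ is a convex cone,
\[
f+\epsilon\,\theta_d\;=\;h+(\epsilon-a_0)\cdot1+\sum_{i=1}^n(\epsilon-a_i)\,x_i^{2d}\;\in\;P(g),
\]
all the added coefficients being nonnegative. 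Since $\epsilon$ was arbitrary, this is precisely (\ref{polq}), which proves (a); part (b) is obtained verbatim after replacing Corollary~\ref{cor-2}, $P_t(g)$ and $\Sigma[\x]\subseteq P(g)$ by their $Q(g)$-counterparts and again using $1,x_i^{2d}\in\Sigma[\x]\subseteq Q(g)$.

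I expect the one genuinely delicate point to be the passage $f\in\overline{P(g)\cap\R[\x]_{2d}}\Rightarrow p^{d\w}_t\to0$: one must be certain that the value returned by Corollary~\ref{cor-2} measures distance to the full \emph{closure} $\overline{\bigcup_t P_t(g)\cap\R[\x]_{2d}}$ (so that it vanishes at $f$), and then transfer this to the finite truncations through the monotonicity of $p^{d\w}_t$. It is worth stressing that the seemingly natural shortcut---placing $\theta_d$ in the \emph{interior} of $\overline{P(g)\cap\R[\x]_{2d}}$ and invoking $\overline{C}+{\rm int}\,C\subseteq{\rm int}\,C$---is \emph{not} available, because $\theta_d$ typically sits on the boundary of this cone (already when $P(g)=\Sigma[\x]$ with $n\geq2$ and $d\geq3$, where a nonzero moment sequence annihilating $\theta_d$ can be exhibited). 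The coercive filling-up step is exactly what circumvents the absence of interiority.
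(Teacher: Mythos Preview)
Your argument follows the paper's own proof very closely: fix $d$ with $f\in\overline{P(g)\cap\R[\x]_{2d}}$ via (\ref{closures}), use Corollary~\ref{cor-2} and Theorem~\ref{thmain-2} to see that the projection coefficients tend to zero, then ``fill up'' to $f+\epsilon\,\theta_d$ by adding nonnegative multiples of $1$ and $x_i^{2d}$. Two points deserve correction.

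First, your display for $h$ reproduces a typo in (\ref{best-p-2w})--(\ref{dual3-p-2w}): the correct perturbation in the $\ell_\w$ case is $\sum_{i,k}\lambda_{ik}\,x_i^{2k}/(2k)!$ (with exponent $2k$, not $2d$). With the correct exponents your filling-up identity $f+\epsilon\theta_d=h+(\epsilon-a_0)+\sum_i(\epsilon-a_i)x_i^{2d}$ fails, since $h$ then contains lower powers $x_i^{2k}$ that cannot be absorbed into $\epsilon\,x_i^{2d}$ by a nonnegative correction. The paper's proof sidesteps this by working with the $\ell_1$-norm (i.e.\ $w_\alpha\equiv1$), where the canonical projection has the simpler form (\ref{simplify}) involving only $x_i^{2d}$; you should do the same, or else replace $\theta_d$ by $1+\sum_{i,k}x_i^{2k}/(2k)!$.

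Second, your closing claim that $\theta_d$ ``typically sits on the boundary'' of $\overline{P(g)\cap\R[\x]_{2d}}$ is incorrect. As the paper remarks right after the proof, $\theta_d$ is an \emph{interior} point of $\Sigma[\x]_d$ in $\R[\x]_{2d}$ (hence of $P_d(g)$ and of $\overline{P(g)\cap\R[\x]_{2d}}$); one checks that $\M_d(\y)\succeq0$ together with $y_0=y_{2de_i}=0$ forces $\y=0$. So the interior-point shortcut you dismiss is in fact available and gives an alternative one-line proof of the ``only if'' direction.
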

\begin{proof}
(a) From (\ref{closures})  $f\in P(g)^{\ddag}$ if and only if 
$f\in \overline{P(g)\cap\R[\x]_{2d}}$ for some $d\in\N$.
Next, by Corollary \ref{cor-2}, $f\in \overline{P(g)\cap\R[\x]_{2d}}$ if and only if
the polynomial $g^\w_f\in\R[\x]_{2d}$ defined in (\ref{best-p-22}) (and which simplifies to (\ref{simplify}) when
$w_\alpha=1$ for all $\alpha\in\N^n$) is identical to $f$. But then
this implies that the polynomial $g^{P\w}_f(t)\in P_t(g)\cap\R[\x]_{2d}$ in (\ref{aux}) is such that
$\sum_{i=0}^n\lambda_i^{P\w}(t)\to0$ as $t\to\infty$ (recall that $w_\alpha=1$ for all $\alpha\in\N^n$). Let $\lambda(t):=\max_i[\lambda^{P\w}_i(t)]$ so that 
$\lambda(t)\to0$ as $t\to\infty$, and the polynomial
\[\x\mapsto f(\x)+\lambda(t)\left(1+\sum_{i=1}^n x_i^{2d}\right)\]
belongs to $P_t(g)\cap\R[\x]_{2d}$ because
\[f+\lambda(t)\left(1+\sum_{i=1}^n x_i^{2d}\right)=g^{P\w}_f(t)+\underbrace{\lambda(t)-\lambda_0^P(t)}_{\geq0}+
\sum_{i=1}^n\underbrace{(\lambda(t)-\lambda^P_f(t))}_{\geq0}x_i^{2d}.\]
Therefore, for every $\epsilon>0$,
choosing $t_\epsilon$
such that $\lambda(t_\epsilon)\leq\epsilon$ ensures that the polynomial
$f+\epsilon (1+\sum_{i=1}^nx_i^{2d})$ is in $P_{t_\epsilon}(g)\cap\R[\x]_{2d}$, which implies the desired result in (a).

The proof of (b)  is omitted as it follows similar arguments. Indeed, it was already mentioned that Theorem \ref{thmain-2} and Corollary \ref{cor-2} have obvious analogues for the quadratic module $Q(g)$.
\end{proof}
Of course in (\ref{polq}) one may replace $1+\sum_{i=1}^nx_i^{2d}$ with $1+\sum_{i=1}^n\sum_{k=1}^d\frac{x_i^{2k}}{(2k)\l}$.
In fact, and as pointed out by an anonymous referee,  in (a) one may also replace 
$1+\sum_{i=1}^nx_i^{2d}$ with any point in the interior of $P_d(g)$ in $\R[\x]_{2d}$
and in (b) with any point in the interior of $Q_d(g)$ in $\R[\x]_{2d}$. The fact that
$1+\sum_{i=1}^nx_i^{2d}$ is an interior point of $\Sigma[\x]_d$ in $\R[\x]_{2d}$
(and hence also an interior point of $P_d(g)$ and $Q_d(g)$ in $\R[\x]_{2d}$) seems to be well-known. 
For instance, it can be deduced from \cite[Theorem 3]{arch}.

\subsection{A Positivstellensatz for non compact $\K$}
As we know how to project with the $\ell_\w$-norm,
we are now able to obtain the following Positivstellensatz on $\K$.
\begin{cor}
\label{pos-stellen}
Let $\K\subseteq\R^n$ in (\ref{setk}) be nonempty interior.
Then $f\geq0$ on $\K$ if and only if for every $\epsilon>0$ there exists $d\in\N$ such that
\begin{equation}
\label{posit}
\x\mapsto f(\x)+\epsilon\left(1+\sum_{i=1}^n\sum_{k=1}^d\frac{x_i^{2k}}{(2k)\l}\right)\,\in\,P(g).\end{equation}
\end{cor}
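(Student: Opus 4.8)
The plan is to derive Corollary~\ref{pos-stellen} directly from the main closure result, Theorem~\ref{thmain-4}, together with the explicit form of the canonical projection. The key observation is that the polynomial $q_d:=1+\sum_{i=1}^n\sum_{k=1}^d x_i^{2k}/(2k)\l$ has $\ell_\w$-norm equal to $1+\sum_{i=1}^n\sum_{k=1}^d 1=1+nd$ when weighted by $w_\alpha=(2\lceil|\alpha|/2\rceil)\l$, since each monomial $x_i^{2k}$ carries coefficient $1/(2k)\l$ and weight $w_{(2k)e_i}=(2k)\l$, contributing exactly $1$ to the $\ell_\w$-norm; more importantly, $q_d$ is a fixed element of the truncated quadratic module/preordering whose $\ell_\w$-size is controlled.

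First I would prove the ``if'' direction. Suppose that for every $\epsilon>0$ there is $d$ with $f+\epsilon q_d\in P(g)\subseteq\psd(\K)$. Fixing any $\x\in\K$ and evaluating gives $f(\x)+\epsilon q_d(\x)\ge 0$; since $q_d(\x)>0$ is finite for fixed $\x$, letting $\epsilon\to 0$ yields $f(\x)\ge 0$. As $\x\in\K$ was arbitrary, $f\in\psd(\K)$. This direction is immediate and uses only $P(g)\subseteq\psd(\K)$.

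For the ``only if'' direction, assume $f\in\psd(\K)$. By Theorem~\ref{thmain-4} we have $\psd(\K)={\rm cl}_\w(P(g))$, so there is a sequence $(h_n)\subset P(g)$ with $\Vert f-h_n\Vert_\w\to 0$. The main obstacle is converting this abstract $\ell_\w$-approximation into the specific additive form $f+\epsilon q_d$ with a \emph{single} polynomial $q_d$ of the prescribed sparse shape. Here I would invoke the projection machinery: fix $\epsilon>0$, and choose $N$ large enough that $f\in\overline{P(g)\cap\R[\x]_{2d}}$ fails to be the issue — rather, I would argue that once $\Vert f-h\Vert_\w<\epsilon$ for some $h\in P(g)$ of degree $2d$, the difference $h-f$ has $\ell_\w$-norm less than $\epsilon$, so writing $h-f=\sum_\alpha c_\alpha\x^\alpha$ with $\sum_\alpha w_\alpha|c_\alpha|<\epsilon$, one dominates each coefficient: $|c_\alpha|\le \epsilon/w_\alpha$. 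Since $w_\alpha^{-1}\x^\alpha$ is bounded in absolute value by terms of the form appearing in $q_d$ (this is precisely the computation showing $|y_\alpha|\le(2k)\l$ from the diagonal moment bounds, as used in the proof of Theorem~\ref{thmain}), the polynomial $\epsilon q_d - (h-f)$ is seen to lie in $P(g)$ because it is a nonnegative combination dominating the small perturbation; hence $f+\epsilon q_d = h+(\epsilon q_d-(h-f))\in P(g)$.

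Rather than reproving these estimates, the cleanest route is to cite Theorem~\ref{th2}(a): that theorem already establishes, for $w_\alpha=1$, that $f\in P(g)^{\ddag}$ iff there is $d$ with $f+\epsilon(1+\sum_i x_i^{2d})\in P(g)$ for all $\epsilon>0$, and the remark following it notes one may replace $1+\sum_i x_i^{2d}$ by $1+\sum_{i=1}^n\sum_{k=1}^d x_i^{2k}/(2k)\l$. Thus I would argue: since $\K$ has nonempty interior, $f\in\psd(\K)={\rm cl}_\w(P(g))$ together with the characterization~(\ref{closures}) of the sequential closure gives $f\in\overline{P(g)\cap\R[\x]_{2d}}$ for a suitable $d$, whence by Corollary~\ref{cor-2} the canonical projection $g^\w_f$ equals $f$, and the displayed computation in the proof of Theorem~\ref{th2}(a) produces exactly~(\ref{posit}). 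The only genuine care needed is to confirm that the $\ell_\w$-weighted version of $q_d$ (with the $1/(2k)\l$ factors) is the natural object, which matches the weights $w_\alpha$ exactly, so no additional estimate is required beyond what Theorem~\ref{th2} supplies.
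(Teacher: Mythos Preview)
Your ``if'' direction is fine and matches the paper. The ``only if'' direction, however, has a genuine gap.

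The route you propose at the end --- pass through Theorem~\ref{th2}(a) and Corollary~\ref{cor-2} --- does not work, because those results characterize $P(g)^{\ddag}=\bigcup_d\overline{P(g)\cap\R[\x]_{2d}}$ (closure for the finest locally convex topology, i.e.\ Euclidean closure on each $\R[\x]_{2d}$), whereas the hypothesis $f\in\psd(\K)$ only gives $f\in{\rm cl}_\w(P(g))$. These two closures are \emph{not} the same: by Scheiderer's negative results cited in the introduction, $P(g)^{\ddag}$ can be strictly smaller than $\psd(\K)$ when $\K$ is non-compact. So you cannot conclude that $f\in\overline{P(g)\cap\R[\x]_{2d}}$ for some fixed $d$, and hence you cannot invoke Corollary~\ref{cor-2} to get $g^\w_f=f$. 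The symptom of this error is visible in the quantifier order: Theorem~\ref{th2} says $\exists d\,\forall\epsilon$, while Corollary~\ref{pos-stellen} says $\forall\epsilon\,\exists d$. In the Positivstellensatz $d$ must be allowed to grow as $\epsilon\to 0$, and your argument via $P(g)^{\ddag}$ forbids exactly that.

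Your earlier sketch (take an arbitrary $h\in P(g)$ with $\Vert f-h\Vert_\w<\epsilon$ and argue $\epsilon q_d-(h-f)\in P(g)$) also fails: the difference $h-f$ is supported on \emph{all} monomials of degree $\le 2d$, not just the sparse ones $1,x_i^{2k}$, so $\epsilon q_d-(h-f)$ has uncontrolled coefficients on the non-sparse monomials and there is no reason it lies in $P(g)$. The paper's argument avoids this by using Theorem~\ref{thmain} directly: the canonical $\ell_\w$-projection $g^{P\w}_f(d)$ of $f$ onto $P_d(g)$ is \emph{already} of the sparse form $f+\lambda_0^{P\w}+\sum_{i,k}\lambda_{ik}^{P\w}x_i^{2k}/(2k)\l$, with $p_{d\w}=\sum\lambda^{P\w}_{ik}\to 0$ as $d\to\infty$ since $f\in{\rm cl}_\w(P(g))$. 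Then, given $\epsilon>0$, pick $d$ so that $\max_{i,k}\lambda^{P\w}_{ik}\le\epsilon$; the polynomial $f+\epsilon q_d$ equals $g^{P\w}_f(d)$ plus a nonnegative combination of $1$ and the $x_i^{2k}/(2k)\l$, hence lies in $P_d(g)+\Sigma[\x]\subset P(g)$.
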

\begin{proof}
The {\it only if part:} Recall that $P(g)=\bigcup_{d\geq0}P_d(g)$, and from Theorem \ref{thmain-4}, $\psd(\K)={\rm cl}_\w(P(g))$.
Let $g^{P\w}_f(d)\in\R[\x]_{2d}$ be the canonical $\ell_\w$-projection of $f$ onto 
$P_d(g)$ given in (\ref{best-p}), where $p_{d\w}=\lambda_0^{P\w}+\sum_{i=1}^n\sum_{k=1}^d
\lambda_{ik}^{P\w}$.
As $f\in{\rm cl}_\w(P(g))$, we necessarily have $\lim_{d\to\infty}p_{d\w}=0$, because
$\Vert f-g^{P\w}_f(d)\Vert_\w\to 0$.
Hence given $\epsilon>0$, let $d$ be such that
$\max_{i,k}\lambda_{ik}^{P\w}\leq\epsilon$. Then
\[f+\epsilon\left(1+\sum_{i=1}^n\sum_{k=1}^d\frac{x_i^{2k}}{(2k)\l}\right)
=g^{P\w}_f(d)+\underbrace{(\epsilon-\lambda^{P\w}_0)+\sum_{i=1}^n\sum_{k=1}^d
(\epsilon-\lambda^{P\w}_{ik})\frac{x_i^{2k}}{(2k)\l}}_{\in\Sigma[\x]},\]
and so $f+\epsilon(1+\sum_{i=1}^n\sum_{k=1}^d\frac{x_i^{2k}}{(2k)\l})\in P(g)$.

The {\it if part.} Let $q_d\in\R[\x]$ be the polynomial in (\ref{posit}), and let
$\x\in\K$ be fixed, arbitrary. Then $0\leq q_d(\x)\leq f(\x)+\epsilon \sum_{i=1}^n\exp{\vert x_i\vert }$.
Therefore, letting $\epsilon\to 0$ yields $f(\x)\geq0$.
\end{proof}

\section{Appendix}
\begin{lem}
\label{newcarleman}
Let $\mu$ a finite Borel measure whose sequence of moments 
$\y=(y_\alpha)$, $\alpha\in\N^n$, is such that for all $i=1,\ldots,n$, and all $k\in\N$,
$L_\y(\x_i^{2k})\leq (2k\l)M$ for some $M$.
Let $f\in\R[\x]$
be such that $L_\y(\x_i^{2t}f)\geq0$ for all $i=1,\ldots,n$, and all $t\in\N$.
Then the sequence $\z^f=(z^f_\alpha)$, $\alpha\in\N^n$, where $z^f_\alpha=L_{\z^f}(\x^\alpha):=L_\y(\x^{\alpha}f)$
for all $\alpha\in\N^n$, satisfies Carleman's condition (\ref{carleman}). 
\end{lem}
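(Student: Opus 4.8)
The plan is to show that $L_{\z^f}(x_i^{2k})$ grows no faster than a factorial in $k$, since this is precisely the factorial-growth criterion already used to verify (\ref{carleman}) in the proofs of Theorems \ref{newlook} and \ref{k-moment}. First I would record the two elementary facts that frame the problem. By the definition of $\z^f$ and linearity, $L_{\z^f}(x_i^{2k})=L_\y(x_i^{2k}f)=\int x_i^{2k}f\,d\mu$, and the hypothesis $L_\y(x_i^{2t}f)\ge0$ with $t=k$ says exactly that this number is nonnegative; hence the roots in (\ref{carleman}) are well defined, and any index where $L_{\z^f}(x_i^{2k})=0$ only helps the series diverge. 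It therefore suffices to bound $L_{\z^f}(x_i^{2k})$ from above by a factorial.

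Writing $f=\sum_{|\beta|\le d}f_\beta\,\x^\beta$ with $d:={\rm deg}\,f$, I would start from
\[
0\,\le\,L_{\z^f}(x_i^{2k})\,\le\,\sum_{|\beta|\le d}|f_\beta|\int |x_i|^{2k}\,|\x^\beta|\,d\mu,
\]
and control each mixed moment $\int |x_i|^{2k}|\x^\beta|\,d\mu=\int|\x^\gamma|\,d\mu$, where $\gamma:=2k\,e_i+\beta$ has total degree $N:=2k+|\beta|\le 2k+d$. The key device is weighted AM--GM: since $\sum_j\gamma_j/N=1$,
\[
|\x^\gamma|\,=\,\prod_{j=1}^n\bigl(|x_j|^{N}\bigr)^{\gamma_j/N}\,\le\,\sum_{j=1}^n\frac{\gamma_j}{N}\,|x_j|^{N},
\]
so that integration gives $\int|\x^\gamma|\,d\mu\le\max_j\int|x_j|^{N}\,d\mu$. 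This reduces everything to a single-variable moment of degree $N$, which the hypothesis on $\y$ controls: from $|t|^{N}\le 1+t^{2\lceil N/2\rceil}$ together with $L_\y(x_j^{2m})\le(2m)\l\,M$ and $2\lceil N/2\rceil\le 2k+d+1$, I obtain the bound
\[
\int|x_j|^{N}\,d\mu\,\le\,\mu(\R^n)+(2k+d+1)\l\,M,
\]
uniformly in $j$ and in $\beta$.

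Combining these estimates gives $L_{\z^f}(x_i^{2k})\le\Vert f\Vert_1\bigl(\mu(\R^n)+(2k+d+1)\l\,M\bigr)\le C\,(2k+d+1)\l$ for a constant $C$ and all $k\ge1$. I would then finish with the Stirling estimate: $\bigl((2k+d+1)\l\bigr)^{1/2k}$ grows at most linearly in $k$, so $L_{\z^f}(x_i^{2k})^{-1/2k}\ge c/k$ for some $c>0$ and all large $k$, whence $\sum_k L_{\z^f}(x_i^{2k})^{-1/2k}=\infty$, which is precisely Carleman's condition (\ref{carleman}). The one genuinely delicate point is the AM--GM reduction of the mixed moments to pure one-variable moments, together with the uniform single-variable bound handling both even and odd $N$; once the factorial growth is in hand, the Stirling step is the same routine computation used elsewhere in the paper.
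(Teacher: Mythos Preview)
Your argument is correct. Both your proof and the paper's aim at the same target: a bound $L_{\z^f}(x_i^{2k})\le C\,(2(k+s))!$ with $s$ depending only on ${\rm deg}\,f$, after which divergence of the Carleman series follows from $((2(k+s))!)^{1/2k}\asymp k$. The difference lies in how the mixed moments $\int x_i^{2k}|f|\,d\mu$ are reduced to pure one--variable moments. The paper splits $\R^n$ into the cube $[-1,1]^n$ (where $|f|\le\Vert f\Vert_1$ and $x_i^{2k}\le1$) and the sectors $W_j=\{\x\notin[-1,1]^n:|x_j|=\max_l|x_l|\}$ (where $|f|\le\Vert f\Vert_1\,x_j^{2s}$ and $x_i^{2k}\le x_j^{2k}$), obtaining $L_{\z^f}(x_i^{2k})\le 2Mn\Vert f\Vert_1\,(2(k+s))!$ directly from the hypothesis on $L_\y(x_j^{2m})$. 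Your route via the weighted AM--GM inequality $|\x^\gamma|\le\sum_j(\gamma_j/N)|x_j|^N$ is a cleaner, more systematic way to achieve the same reduction; it avoids the domain decomposition at the mild cost of the extra step $|t|^N\le 1+t^{2\lceil N/2\rceil}$ to handle odd $N$. The final asymptotic is done in the paper via the explicit inequality $(m!)^{1/m}\le m$ together with an elementary expansion of the exponent $(k+s)/k$; your appeal to Stirling is equivalent, slightly less explicit, and entirely valid.
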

\begin{proof}
Let $1\leq i\leq n$ be fixed arbitrary, and let $2s\geq{\rm deg}f$. Observe that
whenever $\vert \alpha\vert\leq k$, 
$\vert\x\vert^\alpha\leq \vert\x_j\vert^k$ on the subset $W_j:=\{\x\in \R^n\setminus [-1,1]^n\,:\,\vert x_j\vert=\max_i\vert x_i\vert\}$. And so, $\vert f(\x)\vert\leq \Vert f\Vert_1\, \vert\x_j\vert^{2s}$ for all $\x\in W_j$. Hence,
\begin{eqnarray*}
L_{\z^f}(x_i^{2k})&=&\int f(\x)\,x_i^{2k}d\mu(\x)\\
&\leq&\int_{[-1,1]^n} \vert f(\x)\vert\,x_i^{2k}d\mu(\x)+
\Vert f\Vert_1\,\sum_{j=1}^n\int_{W_j} \,x_j^{2(k+s)}d\mu(\x)\\
&\leq& M\Vert f\Vert_1+Mn\Vert f\Vert_1\,(2(k+s))\l\,\leq\,2Mn\Vert f\Vert_1\,(2(k+s))\l,
\end{eqnarray*}
and so we have
\begin{eqnarray*}
L_{\z^f}(x_i^{2k})^{-1/2k}&\geq& (2Mn\Vert f\Vert_1)^{-1/2k}\,\left(((2(k+s))\l)^{-1/2(k+s)}\right)^{(k+s)/k}\\
&\geq& \frac{1}{2}\left(((2(k+s))\l)^{-1/2(k+s)}\right)^{(k+s)/k}\\
&\geq&\frac{1}{2}\left(\frac{1}{2(k+s)}\right)^{(k+s)/k},
\end{eqnarray*}
where $k\geq k_0$ is sufficiently large so that $(2Mn\Vert f\Vert_1)^{-1/2k}\geq 1/2$. 
Therefore,
\[\sum_{k=1}^\infty
L_{\z^f}(x_i^{2k})^{-1/2k}\geq \frac{1}{2}\sum_{k=k_0}^\infty\left(\frac{1}{2(k+s)}\right)^{(k+s)/k}\,=\,+\infty.\]
where the last equality follows from $(\frac{1}{2(k+s)})^{(k+s)/k}=(\frac{1}{2(k+s)})(\frac{1}{2(k+s)})^{s/k}$ and
$(\frac{1}{2(k+s)})^{s/k}\geq 1/2$ whenever $k$ is sufficiently large, say $k\geq k_1$. Hence the sequence
$\z^f$ satisfies Carleman's condition (\ref{carleman}).
\end{proof}
\subsection*{Acknowledgement}
The author wishes to thank an anonymous referee for several comments and suggestions
that helped writing the final version.

\end{document}